\newcommand{\klockan}{\the\hours:{\ifnum\minutes<10 0\fi}\the\minutes}
\newcommand{\tid}{\today\ \klockan}
\newcommand{\prtid}{\smash{\raise 10mm \hbox{\LaTeX ed \tid}}}
\renewcommand{\prtid}{}
\def\sectionmark#1{} 
\def\subsectionmark#1{}
\newcommand{\sectnr}{\ifnum \c@secnumdepth >\z@
                 \thesection.\hskip 1em\relax \fi}
\def\@evenhead{\footnotesize\rm\thepage\hfil\leftmark\hfil\llap{\prtid}}
\def\@oddhead{\footnotesize\rm\rlap{\prtid}\hfil\rightmark\hfil\thepage}
\def\tableofcontents{\section*{Contents} 
 \@starttoc{toc}}
\def\@biblabel#1{#1.}
\let\Thebibliography=\thebibliography
\renewcommand{\thebibliography}[1]{\def\@mkboth##1##2{}\Thebibliography{#1}
\addcontentsline{toc}{section}{References}
\frenchspacing 
\setlength{\@topsep}{0pt}
\setlength{\itemsep}{0pt}%
\setlength{\parskip}{0pt plus 2pt}%
}
\def\mdots@{\mathinner.\nonscript\!.%
 \ifx\next,.\else\ifx\next;.\else\ifx\next..\else
 \nonscript\!\mathinner.\fi\fi\fi}
\let\ldots\mdots@
\let\cdots\mdots@
\let\dotso\mdots@
\let\dotsb\mdots@
\let\dotsm\mdots@
\let\dotsc\mdots@
\def\vdots{\vbox{\baselineskip2.8\p@ \lineskiplimit\z@
    \kern6\p@\hbox{.}\hbox{.}\hbox{.}\kern3\p@}}
\def\ddots{\mathinner{\mkern1mu\raise8.6\p@\vbox{\kern7\p@\hbox{.}}%
    \raise5.8\p@\hbox{.}\raise3\p@\hbox{.}\mkern1mu}}
\let\Enumerate=\enumerate
\renewcommand{\enumerate}{\Enumerate%
\setlength{\@topsep}{0pt}
\setlength{\itemsep}{0pt}%
\setlength{\parskip}{0pt plus 1pt}%
\renewcommand{\theenumi}{\textup{(\alph{enumi})}}%
\renewcommand{\labelenumi}{\theenumi}%
}
\let\endEnumerate=\endenumerate
\renewcommand{\endenumerate}{\endEnumerate\unskip}
\def\@seccntformat#1{\csname the#1\endcsname.\quad}
\long\def\@makecaption#1#2{%
  \vskip\abovecaptionskip
  \sbox\@tempboxa{ #1. #2}%
  \ifdim \wd\@tempboxa >\hsize
    #1. #2\par
  \else
    \global \@minipagefalse
    \hb@xt@\hsize{\hfil\box\@tempboxa\hfil}%
  \fi
  \vskip\belowcaptionskip}
\renewcommand\section{\@startsection {section}{1}{\z@}%
                                   {-3.5ex \@plus -1ex \@minus -.2ex}%
                                   {2.3ex \@plus.2ex}%
                                   {\normalfont\Large\bfseries\boldmath}}
\renewcommand\subsection{\@startsection{subsection}{2}{\z@}%
                                     {-3.25ex\@plus -1ex \@minus -.2ex}%
                                     {1.5ex \@plus .2ex}%
                                     {\normalfont\large\bfseries\boldmath}}
\renewcommand\subsubsection{\@startsection{subsubsection}{3}{\z@}%
                                     {-3.25ex\@plus -1ex \@minus -.2ex}%
                                     {1.5ex \@plus .2ex}%
                                     {\normalfont\normalsize\bfseries\boldmath}}
\renewcommand\paragraph{\@startsection{paragraph}{4}{\z@}%
                                    {3.25ex \@plus1ex \@minus.2ex}%
                                    {-1em}%
                                    {\normalfont\normalsize\bfseries\boldmath}}
\renewcommand\subparagraph{\@startsection{subparagraph}{5}{\parindent}%
                                       {3.25ex \@plus1ex \@minus .2ex}%
                                       {-1em}%
                                      {\normalfont\normalsize\bfseries\boldmath}}
\newcommand{\authortitle}[2]{\author{#1}\title{#2}\markboth{#1}{#2}}
\newcommand{\art}[6]{{\sc #1, \rm #2, \it #3 \bf #4 \rm (#5), \mbox{#6}.}}
\newcommand{\book}[3]{{\sc #1, \it #2, \rm #3.}}
\newcommand{\auth}[2]{{#1, #2.}}
\newcommand{\AND}{{\rm and }}
\newtheoremstyle{descriptive}%
  {\topsep}   
  {\topsep}   
  {\rmfamily} 
  {}          
  {\bfseries} 
  {.}         
  { }         
  {}          
\newtheoremstyle{propositional}%
  {\topsep}   
  {\topsep}   
  {\itshape}  
  {}          
  {\bfseries} 
  {.}         
  { }         
  {}          
\newtheoremstyle{remarkstyle}%
  {\topsep}   
  {\topsep}   
  {\rmfamily}  
  {}          
  {\itshape} 
  {.}         
  { }         
  {}          
\theoremstyle{propositional}
\newtheorem{thm}{Theorem}[section]
\newtheorem{lem}[thm]{Lemma}
\theoremstyle{descriptive}
\newtheorem{deff}[thm]{Definition}
\theoremstyle{remarkstyle}
\newtheorem{remark}[thm]{Remark}
\newtheorem{example}[thm]{Example}
\renewenvironment{proof}[1][\proofname]{\par
  \pushQED{\qed}%
  \normalfont
  \trivlist
  \item[\hskip\labelsep
        \itshape
    #1\@addpunct{.}]\ignorespaces
}{%
  \popQED\endtrivlist\@endpefalse
}
\newcommand{\setm}{\setminus}
\newcommand{\strutdepth}{\hbox{\vrule height0pt depth4.5pt width0pt}}
\newcommand{\strutheight}{\hbox{\vrule height10pt depth0pt width0pt}}
{\catcode`p =12 \catcode`t =12 \gdef\eeaa#1pt{#1}}      
\def\accentadjtext#1{\setbox0\hbox{$#1$}\kern   
                \expandafter\eeaa\the\fontdimen1\textfont1 \ht0 }
\def\accentadjscript#1{\setbox0\hbox{$#1$}\kern 
                \expandafter\eeaa\the\fontdimen1\scriptfont1 \ht0 }
\def\accentadjscriptscript#1{\setbox0\hbox{$#1$}\kern   
                \expandafter\eeaa\the\fontdimen1\scriptscriptfont1 \ht0 }
\def\accentadjtextback#1{\setbox0\hbox{$#1$}\kern       
                -\expandafter\eeaa\the\fontdimen1\textfont1 \ht0 }
\def\accentadjscriptback#1{\setbox0\hbox{$#1$}\kern     
                -\expandafter\eeaa\the\fontdimen1\scriptfont1 \ht0 }
\def\accentadjscriptscriptback#1{\setbox0\hbox{$#1$}\kern 
                -\expandafter\eeaa\the\fontdimen1\scriptscriptfont1 \ht0 }
\def\itoverline#1{{\mathsurround0pt\mathchoice
        {\rlap{$\accentadjtext{\displaystyle #1}
                \accentadjtext{\vrule height1.593pt}
                \overline{\phantom{\displaystyle #1}
                \accentadjtextback{\displaystyle #1}}$}{#1}}
        {\rlap{$\accentadjtext{\textstyle #1}
                \accentadjtext{\vrule height1.593pt}
                \overline{\phantom{\textstyle #1}
                \accentadjtextback{\textstyle #1}}$}{#1}}
        {\rlap{$\accentadjscript{\scriptstyle #1}
                \accentadjscript{\vrule height1.593pt}
                \overline{\phantom{\scriptstyle #1}
                \accentadjscriptback{\scriptstyle #1}}$}{#1}}
        {\rlap{$\accentadjscriptscript{\scriptscriptstyle #1}
                \accentadjscriptscript{\vrule height1.593pt}
                \overline{\phantom{\scriptscriptstyle #1}
                \accentadjscriptscriptback{\scriptscriptstyle #1}}$}{#1}}}}
\def\vint{\mathop{\mathchoice%
          {\setbox0\hbox{$\displaystyle\intop$}\kern 0.22\wd0%
           \vcenter{\hrule width 0.6\wd0}\kern -0.82\wd0}%
          {\setbox0\hbox{$\textstyle\intop$}\kern 0.2\wd0%
           \vcenter{\hrule width 0.6\wd0}\kern -0.8\wd0}%
          {\setbox0\hbox{$\scriptstyle\intop$}\kern 0.2\wd0%
           \vcenter{\hrule width 0.6\wd0}\kern -0.8\wd0}%
          {\setbox0\hbox{$\scriptscriptstyle\intop$}\kern 0.2\wd0%
           \vcenter{\hrule width 0.6\wd0}\kern -0.8\wd0}}%
          \mathopen{}\int}
\newcommand{\grad}{\nabla}
\DeclareMathOperator{\Div}{div}
\DeclareMathOperator{\Lip}{Lip}
\newcommand{\Lipc}{{\Lip_c}}
\newcommand{\bdry}{\partial}
\newcommand{\bdy}{\bdry}
\newcommand{\loc}{_{\rm loc}}
\newcommand{\limplus}{{\mathchoice{\raise.17ex\hbox{$\scriptstyle +$}}
                {\raise.17ex\hbox{$\scriptstyle +$}}
                {\raise.1ex\hbox{$\scriptscriptstyle +$}}
                {\scriptscriptstyle +}}}
\newcommand{\alp}{\alpha}
\newcommand{\al}{\alpha}
\newcommand{\ga}{\gamma}
\newcommand{\Om}{\Omega}
\renewcommand{\phi}{\varphi}
\newcommand{\eps}{\varepsilon}
\newcommand{\p}{{$p\mspace{1mu}$}}
\newcommand{\R}{\mathbf{R}}
\newcommand{\Sphere}{\mathbf{S}}
\newcommand{\Qaln}{Q_{\al,n}}
\newcommand{\Wp}{W^{1,p}}
\newcommand{\Wploc}{W^{1,p}\loc}
\newcommand{\phit}{{\widetilde{\phi}}}
\newcommand{\phib}{{\overline{\phi}}}
\newcommand{\etat}{{\tilde{\eta}}}
\newcommand{\Ih}{\hat{I}}
\newcommand{\coloneqq}{:=}
\newcommand{\Qt}{\widetilde{Q}}
\newcommand{\Qhat}{\widehat{Q}}
\newcommand{\bQ}{\itoverline{Q}}
\newcommand{\ub}{\bar{u}}
\newcommand{\stab}[2]{\noindent\begin{tabular}{@{}r@{}}#1\\ #2\end{tabular}}
\numberwithin{equation}{section}
\newenvironment{ack}{\medskip{\it Acknowledgement.}}{}
\begin{document}
\authortitle{Anders Bj\"orn, Jana Bj\"orn
    and Ismail Mirumbe}
{The quasisuperminimizing constant for the minimum
of two quasisuperminimizers in $\R^n$}

\author{
Anders Bj\"orn \\
\it\small Department of Mathematics, Link\"oping University, \\
\it\small SE-581 83 Link\"oping, Sweden\/{\rm ;}
\it \small anders.bjorn@liu.se
\\
\\
Jana Bj\"orn \\
\it\small Department of Mathematics, Link\"oping University, \\
\it\small SE-581 83 Link\"oping, Sweden\/{\rm ;}
\it \small jana.bjorn@liu.se
\\
\\
Ismail Mirumbe\\
\it\small Department of Mathematics, Makerere University, \\
\it\small P.O. Box 7062, Kampala, Uganda\/{\rm ;}
\it\small mirumbe@cns.mak.ac.ug
\\
}

\date{Preliminary version, \today}
\date{}

\maketitle

\noindent{\small
 {\bf Abstract}.
It was shown in
Bj\"orn--Bj\"orn--Korte (``\emph{Minima of quasisuperminimizers}'',
Nonlinear Anal. {\bf 155} (2017), 264--284)
that $u:=\min\{u_1,u_2\}$ is a $\bQ$-quasisuperminimizer
if $u_1$ and $u_2$ are $Q$-quasisuperminimizers and $\bQ=2Q^2/(Q+1)$.
Moreover, one-dimensional examples 
therein show  
that $\bQ$ is close to optimal.
In this paper we give similar examples in higher dimensions.
The case when $u_1$ and $u_2$ have different quasisuperminimizing
constants is considered as well.
}

\bigskip

\noindent {\small \emph{Key words and phrases}:
nonlinear potential theory, 
quasiminimizer,
quasisuperminimizer.}

\medskip

\noindent {\small Mathematics Subject Classification (2010):
Primary: 31C45; Secondary: 35J60.}

\section{Introduction}

Let $\Omega \subset \R^n$ be a nonempty open set and $1< p < \infty$. 
A function $u\in \Wploc(\Omega)$ is a 
\emph{$Q$-quasi\/\textup{(}super\/\textup{)}minimizer} in
$\Omega$, with $Q\ge1$, if
\begin{equation}\label{eqmain}
   \int_{\varphi \neq 0} |\nabla u|^{p}\,dx
    \leq      Q\int_{\varphi \neq 0} |\nabla( u + \varphi)|^{p}\,dx
\end{equation}
for all (nonnegative) $\varphi \in  \Wp_0(\Omega)$.  
Quasiminimizers were introduced
by Giaquinta--Giusti~\cite{GG1} as a tool for a unified treatment
of variational integrals, elliptic equations and systems, and
quasiregular mappings on $\R^n$. 

Quasi(super)minimizers have an interesting theory already
in the one-dimensional case, see e.g.\ \cite{GG1}
and Martio--Sbordone~\cite{MaSb}.
Kinnunen--Martio~\cite{KiMa03} showed
that one can build a rich potential theory
based on quasiminimizers.
In particular, they introduced quasisuperharmonic functions, which
are related to quasisuperminimizers in a similar way as
superharmonic functions are related to supersolutions.
See 
Bj\"orn--Bj\"orn--Korte~\cite{BBKorte}
for further references.

Kinnunen--Martio~\cite[Lemmas~3.6 and~3.7]{KiMa03} also showed
that if 
$u_j$ is a $Q_j$-quasisuperminimizer
in $\Om \subset \R^n$ (or in a metric space), $j=1,2$,
then 
$u:=\min\{u_1,u_2\}$ is a 
$\min\{Q_1+Q_2,Q_1Q_2\}$-quasisuperminimizer.
Bj\"orn--Bj\"orn--Korte~\cite{BBKorte} improved upon this result
in the following way.

\begin{thm} \label{thm-BBK}
\textup{(Theorem~1.2 in \cite{BBKorte})}
Let $u_j$ be a $Q_j$-quasisuperminimizer
in $\Om \subset \R^n$ \textup{(}or in a metric space\/\textup{)}, $j=1,2$.
Then $u:=\min\{u_1,u_2\}$ is a $\bQ$-quasisuperminimizer
in $\Om$, where
\begin{equation}   \label{eq-def-q-for-min}
\bQ=\begin{cases}
  1, & \text{if } Q_1=Q_2=1, \\
  \displaystyle (Q_{1}+Q_{2}-2)\frac{Q_{1}Q_{2}}{Q_{1}Q_{2}-1}, & \text{otherwise.}
  \end{cases}
\end{equation}
In particular, if $Q_1=Q_2$, then $\bQ=2Q_1^2/(Q_1+1)$.
\end{thm}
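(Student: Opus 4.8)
The plan is to verify \eqref{eqmain} for $u:=\min\{u_1,u_2\}$ head-on. Fix a nonnegative $\varphi\in\Wp_0(\Om)$ and put $v:=u+\varphi\ge u$; since $\varphi\ge0$ we have $\{\varphi\ne0\}=\{v>u\}=\{v>u_1\}\cup\{v>u_2\}$, and the goal is the estimate $\int_{\{\varphi\ne0\}}|\nabla u|^p\le\bQ\int_{\{\varphi\ne0\}}|\nabla v|^p$. First I would set up the book-keeping: by the usual lattice properties of Sobolev functions $u\in\Wploc(\Om)$, with $\nabla u=\nabla u_1$ a.e.\ on $\{u_1\le u_2\}$ and $\nabla u=\nabla u_2$ a.e.\ on $\{u_1\ge u_2\}$ (consistently on $\{u_1=u_2\}$). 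Partition $\{\varphi\ne0\}$ into $F_1:=\{v>u_1\}\setm\{v>u_2\}$, $F_2:=\{v>u_2\}\setm\{v>u_1\}$ and $T:=\{v>u_1\}\cap\{v>u_2\}=\{v>\max\{u_1,u_2\}\}$, splitting the last one further as $T=G_1\cup G_0\cup G_2$ according to whether $u_1<u_2$, $u_1=u_2$ or $u_1>u_2$; note $u=u_1$ on $F_1\cup G_1\cup G_0$ and $u=u_2$ on $F_2\cup G_2\cup G_0$.

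The next step is to feed four competitors into the hypotheses. For $u_1$ I would use $\psi:=(v-u_1)^+$ and $\psi':=(\min\{v,u_2\}-u_1)^+$; since $u\le u_1$, both are squeezed between $0$ and $\varphi$ and vanish outside $\{\varphi\ne0\}$, hence lie in $\Wp_0(\Om)$ by a routine truncation argument, and $u_1+\psi=\max\{u_1,v\}$, $u_1+\psi'=\max\{u_1,\min\{v,u_2\}\}$. Since $u_1+\psi$ equals $v$ on $\{\psi\ne0\}=\{v>u_1\}$, while $\{\psi'\ne0\}=F_1\cup G_1$ and $u_1+\psi'$ equals $v$ on $F_1$ and $u_2$ on $G_1$, applying \eqref{eqmain} to $u_1$ gives
\begin{equation*}
  \int_{\{v>u_1\}}|\nabla u_1|^p\le Q_1\int_{\{v>u_1\}}|\nabla v|^p,
  \qquad
  \int_{F_1\cup G_1}|\nabla u_1|^p\le Q_1\Bigl(\int_{F_1}|\nabla v|^p+\int_{G_1}|\nabla u_2|^p\Bigr).
\end{equation*}
The symmetric choices $(v-u_2)^+$ and $(\min\{v,u_1\}-u_2)^+$ yield the mirror inequalities for $u_2$.

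Now comes the combination, which is where the constant $\bQ$ is produced. Abbreviate $a:=\int_{F_1\cup G_1}|\nabla u_1|^p$, $b:=\int_{F_2\cup G_2}|\nabla u_2|^p$, $z:=\int_{G_0}|\nabla u_1|^p$, so that $\int_{\{\varphi\ne0\}}|\nabla u|^p=a+b+z$, and let $I_1,I_2,I_0$ be the integrals of $|\nabla v|^p$ over $F_1,F_2,T$. Using the first ("full") inequality for $u_1$ to bound $\int_{G_2}|\nabla u_1|^p$ and substituting into the second ("cross") inequality for $u_2$ gives $b+Q_2a+Q_2z\le Q_2I_2+Q_1Q_2(I_1+I_0)$; symmetrically $a+Q_1b+Q_1z\le Q_1I_1+Q_1Q_2(I_2+I_0)$. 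Take the convex combination of these with weights $\theta:=(Q_1-1)/(Q_1+Q_2-2)$ and $1-\theta$ (well defined in $[0,1]$ unless $Q_1=Q_2=1$): this choice forces the coefficients of $a$ and of $b$ to coincide, both equal to $c:=(Q_1Q_2-1)/(Q_1+Q_2-2)>0$, while the coefficient of $z$ comes out to be $c+(1-\theta)(Q_1-1)\ge c$. Hence $c(a+b+z)\le Q_1c\,I_1+Q_2c\,I_2+Q_1Q_2I_0$; dividing by $c$ and using $Q_1,Q_2\le Q_1Q_2/c=\bQ$ (each of which reduces to $(Q_j-1)^2\ge0$) yields exactly $\int_{\{\varphi\ne0\}}|\nabla u|^p\le\bQ\int_{\{\varphi\ne0\}}|\nabla v|^p$. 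When $Q_1=Q_2=1$ (where $\theta$ is indeterminate) the claim is the classical fact, already contained in Kinnunen--Martio, that the minimum of two superminimizers is a superminimizer.

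The main obstacle is precisely this last step: realizing that the obvious test functions $(v-u_1)^+$, $(v-u_2)^+$ alone deliver only the weaker constant $Q_1+Q_2$, that the improvement has to come from the "capped" competitors $\psi'$, and then pinning down the weight $\theta$ and checking that the leftover coefficient on the $\{u_1=u_2\}$-term does not exceed $c$. The remaining ingredients — membership of the test functions in $\Wp_0(\Om)$, the identification of $\nabla u$ and of the modified functions on the various pieces, and the disjointness of the partition — are routine and rest on standard Sobolev-lattice facts.
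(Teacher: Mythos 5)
The paper itself does not prove Theorem~\ref{thm-BBK}; it is quoted from Theorem~1.2 of \cite{BBKorte}, so there is no in-paper proof to compare with, and your argument has to be judged on its own. It is correct, and it is the same kind of argument as in the cited source: the decisive ingredients are the capped competitors $(\min\{v,u_2\}-u_1)^+$ and $(\min\{v,u_1\}-u_2)^+$ alongside $(v-u_1)^+$, $(v-u_2)^+$, and the weighted combination with $\theta=(Q_1-1)/(Q_1+Q_2-2)$, which equalizes the coefficients of $a$ and $b$ at $c=(Q_1Q_2-1)/(Q_1+Q_2-2)$ and yields $\bQ=Q_1Q_2/c$. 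I have checked the bookkeeping (the identities on $F_1,F_2,G_1,G_0,G_2$, the two cross inequalities, the coefficient $c+(1-\theta)(Q_1-1)\ge c$ of the $\{u_1=u_2\}$-term, and $Q_j\le\bQ$), and it all goes through; note also that your substitution step needs no subtraction and hence no finiteness assumption, since one can add $Q_2(a+z)$ to both sides and bound $\int_{G_2}|\nabla u_1|^p\,dx+a+z$ by $Q_1(I_1+I_0)$ directly. The one point you treat too lightly is the admissibility of $\psi$ and $\psi'$: the domination $0\le\psi\le\varphi$ by itself does not give $\psi\in \Wp(\Om)$, because $u,u_1\in\Wploc(\Om)$ and $\{\varphi\ne0\}$ need not be compactly contained in $\Om$. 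The standard fix, which the paper explicitly licenses at the start of Section~\ref{sect-qmin}, is to verify \eqref{eqmain} only for nonnegative $\varphi\in\Lipc(\Om)$; then all four competitors are supported in the compact support of $\varphi$, hence belong to $\Wp_0(\Om)$, and the rest of your proof is unaffected.
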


It is not known whether $\bQ$ is optimal, but
it is the best upper bound known.
On the other hand, that $u$ is 
(in general) 
not better than a $\max\{Q_1,Q_2\}$-quasi\-super\-minimizer is rather obvious.

The first
examples
(and so far the only ones)
showing  that $u$ is (in general) \emph{not} a 
$\max\{Q_1,Q_2\}$-quasi\-super\-minimizer
were given in \cite{BBKorte}.
More precisely, if $1<Q_1 \le Q_2$ then there 
are $Q_j$-quasisuperminimizers $u_j$ on $(0,1) \subset \R$ such that
$u:=\min\{u_1,u_2\}$ is not a $Q_2$-quasisuperminimizer.
Estimates and concrete examples, showing 
that the constant $\bQ$ above is not too far from being optimal,
  were also given in~\cite{BBKorte}.
All examples therein
were on $(0,1)\subset\R$
and our aim in this paper is to obtain similar examples in higher
dimensions, i.e.\ for subsets of $\R^n$, $n \ge 2$.

The examples in \cite{BBKorte} (giving the best lower bounds)
were based on power functions $x \mapsto x^\alpha$ and reflections
of such functions.
For such functions, also in the higher-dimensional case on  $\R^n$, $n\ge2$,
optimal quasi(super)minimizing constants $Q(\alpha,p,n)$ were obtained in 
Bj\"orn--Bj\"orn~\cite{BBpower}, and these formulas for $Q(\alpha,p,n)$
(with $n=1$)
were used in the calculations in \cite{BBKorte}.

As power-type functions $x \mapsto |x|^\alpha$ only 
have point singularities, it seems difficult
to use them for higher-dimensional analogues of the examples
in \cite{BBKorte}.
Instead we base our examples
on log-power functions $\log^\alp |x|$
and $(-\log |x|)^\alp$.
Since 
$\log|ex|=1-(-\log|x|)$,
we
 are able to scale and translate
them and create higher-dimensional
examples on annuli, in the spirit of \cite{BBKorte}.
For this to be possible we need the log-powers to be quasisuperminimizers
which requires $p$ to be equal to the conformal dimension $n$.
In particular we obtain the following result.

\begin{thm} \label{thm0}
 Let $p=n \ge 2$ and $1 < Q_1 \leq Q_2$ be given. 
Then there are  functions
 $u_1$ and $u_2$ on $A:=\{x \in \R^n : 1/e < |x| < 1\}$ 
such that $u_j$ is a $Q_j$-quasisuperminimizer in $A$, $j = 1,2$,
 but $\min\{u_1, u_2\}$ is
 not a $Q_2$-quasiminimizer in~$A$.
\end{thm}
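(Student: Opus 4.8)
The plan is to build the examples from log-power functions on the annulus $A=\{1/e<|x|<1\}$, exploiting the conformal invariance that holds precisely when $p=n$. First I would recall (or quote from \cite{BBpower}, suitably transported to the log setting) that for $p=n$ the radial functions $v_\al(x)=(-\log|x|)^\al$ and $w_\al(x)=\log^\al|ex|=(1+\log|x|)^\al$ are $Q(\al)$-quasisuperminimizers on $A$ for an explicit constant $Q(\al)$, with $Q(\al)\to\infty$ as $\al\to 0\limplus$ and as $\al\to\infty$, and $Q(1)=1$. The key structural point is that $v_\al$ has its "singular behaviour" concentrated on the inner sphere $|x|=1/e$, while $w_\al$ — obtained from $v_\al$ by the substitution $\log|x|\mapsto -\log|ex|$, i.e.\ the inversion $x\mapsto x/(e|x|^2)$ composed with the radial reflection fixing $A$ — has its singular behaviour on the outer sphere $|x|=1$. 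Since $p=n$ this reflection is conformal and preserves quasisuperminimizers together with their constants, so $w_\al$ is a $Q(\al)$-quasisuperminimizer exactly when $v_\al$ is.

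Next, given $1<Q_1\le Q_2$, I would choose exponents: pick $\al_1$ with $Q(\al_1)=Q_1$ on the branch $\al_1>1$ (so $v_{\al_1}$ is increasing in $|x|$ in the appropriate normalization), and pick $\al_2$ with $Q(\al_2)=Q_2$; set $u_1(x)=v_{\al_1}(x)$ (or an affine image thereof) and $u_2(x)=w_{\al_2}(x) = c - v_{\al_2}(x/(e|x|^2))$ for a suitable constant $c$ and sign, arranged so that $u_1$ and $u_2$ cross transversally on some intermediate sphere $|x|=r_0\in(1/e,1)$, with $u=\min\{u_1,u_2\}$ equal to $u_1$ near the inner boundary and to $u_2$ near the outer boundary. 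The function $u$ is then radial and piecewise a log-power, and on a test annulus $A'=\{r_1<|x|<r_2\}$ straddling $r_0$ one computes $\int_{A'}|\grad u|^n\,dx$ explicitly and compares it with $\int_{A'}|\grad(u+\phi)|^n\,dx$ for a well-chosen radial competitor $\phi$ (the natural choice being the one making $u+\phi$ the log-harmonic, i.e.\ $\al=1$, interpolant between the boundary values on $\bdy A'$, which is the genuine minimizer). The quasisuperminimizing inequality \eqref{eqmain} for $u$ with constant $Q_2$ would force
\[
   \int_{A'}|\grad u|^n\,dx \le Q_2\int_{A'}|\grad(u+\phi)|^n\,dx,
\]
and the point is to show this fails: the left-hand side picks up contributions from \emph{both} the steep $v_{\al_1}$-part and the steep $v_{\al_2}$-part, and by choosing $r_1,r_2$ close to $r_0$ and letting the crossing be sharp, the ratio exceeds $Q_2$. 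This is exactly the mechanism of the one-dimensional examples in \cite{BBKorte}, now realized via $\log|x|$ playing the role the variable $x$ played there.

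The main obstacle I expect is the bookkeeping of the gluing: one must check that $u_2=w_{\al_2}$ really is a $Q_2$-quasisuperminimizer on all of $A$ (not just away from the outer sphere) — this is where conformal invariance for $p=n$ is essential and must be invoked cleanly, since $w_{\al_2}$ genuinely fails to be $W^{1,n}$ up to $|x|=1$ in general and one needs the estimate to survive the limit — and, more delicately, that the crossing sphere $r_0$ and the exponents can be chosen simultaneously so that $\min\{u_1,u_2\}$ violates the $Q_2$-inequality rather than merely some larger-constant inequality. Concretely, after reducing to a radial one-variable problem by writing $t=-\log|x|\in(0,1)$ and $g(t)=u(x)$, the integral $\int_A |\grad u|^n\,dx$ becomes (up to the surface-measure constant $\om_{n-1}$) $\int_0^1 |g'(t)|^n\,dt$, so the computation collapses exactly to the $\R$-case already done in \cite{BBKorte}, and the whole difficulty is packaging the change of variables $x\mapsto -\log|x|$ and the conformal reflection so that the constants are genuinely preserved. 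Once that is set up, the estimate that $\min\{u_1,u_2\}$ is not a $Q_2$-quasiminimizer follows by quoting the corresponding one-dimensional computation verbatim; I would present the log-change-of-variables lemma first, then the construction of $u_1,u_2$, and finish by transporting the one-dimensional counterexample.
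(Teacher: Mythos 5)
Your setup (log-powers in the conformal case $p=n$, one function steep at the inner sphere and one at the outer sphere, a single crossing sphere $|x|=r_0$, and the reduction to one variable via $t=\log|ex|$) matches the paper's construction. But the decisive step of your argument --- testing the quasisuperminimizing inequality on a \emph{small} annulus $A'=\{r_1<|x|<r_2\}$ with $r_1,r_2$ close to $r_0$ and claiming that ``letting the crossing be sharp'' makes the energy ratio exceed $Q_2$ --- does not work, and it is not the mechanism of the one-dimensional examples in \cite{BBKorte} either. Near $r_0$ both $u_1$ and $u_2$ are smooth with finite radial slopes $a=|u_1'(r_0)|$ and $b=|u_2'(r_0)|$ (the singular behaviour sits at $|x|=1/e$ and $|x|=1$, far from $r_0$), so as $A'$ shrinks the ratio of the energy of $\min\{u_1,u_2\}$ to that of the minimizer with the same boundary values tends to
\[
   \frac{\theta a^n+(1-\theta)b^n}{(\theta a+(1-\theta)b)^n},
   \qquad \theta=\frac{r_0-r_1}{r_2-r_1}\ \text{(in log coordinates)},
\]
which equals $1$ when $a=b$ and, for $Q_1=Q_2=Q$ close to $1$, is close to $1$ --- in particular smaller than $Q_2$. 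Moreover the ``sharpness'' of the crossing is not a free parameter: once $\al_1,\al_2$ are fixed by $Q_1,Q_2$, the crossing point and the slopes there are determined. So no local test at $r_0$ can yield the theorem; the violation of the $Q_2$-inequality is a global phenomenon.

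What the paper does instead (and what your argument is missing) is to test on \emph{all} of $A$ with $\phi=v-u\ge0$, where $v(x)=\log|ex|$ is the minimizer with the same boundary values as $u_1$, $u_2$ and $u=\min\{u_1,u_2\}$ (both $u_j$ are subminimizers, so $u<v$ and $\phi$ is admissible). Two ingredients then give the strict inequality $\int_A|\nabla u|^n\,dx>Q_2\int_A|\nabla v|^n\,dx$: first, $u_2$ has the \emph{maximal $n$-energy allowed by $Q_2$} on $A$, i.e.\ $\int_A|\nabla u_2|^n\,dx=Q_2\int_A|\nabla v|^n\,dx$ (this is the optimality statement from Theorems~\ref{thm-qmin-log-power-p=n-new} and~\ref{thm-qmin-log-power-p=n}, valid on the whole annulus up to the singular boundary sphere, where the energy is still finite since $\al>1-1/n$); second, on the inner part $A_{1/e,r_0}$, where $u=u_1\le u_2$ with equality on the boundary, the subminimizer property of $u_2$ together with uniqueness of solutions of obstacle problems gives the strict comparison $\int_{A_{1/e,r_0}}|\nabla u_1|^n\,dx>\int_{A_{1/e,r_0}}|\nabla u_2|^n\,dx$. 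Adding the outer part yields $\int_A|\nabla u|^n\,dx>\int_A|\nabla u_2|^n\,dx=Q_2 I_v$, so $u$ is not a $Q_2$-quasisuperminimizer, hence not a $Q_2$-quasiminimizer. Your proposal never engages with either ingredient (the maximal-energy identity on $A$ or the obstacle-problem comparison), and without them the asserted violation of the $Q_2$-bound is not established. The remaining issues in your write-up (branch selection $\al_2<1$ versus $\al_1>1$, using the conformal reflection instead of the paper's scaling identity $\log|ex|=1-(-\log|x|)$, and the worry about membership in $W^{1,n}$ up to $|x|=1$, which is unfounded for $\al>1-1/n$) are minor and fixable.
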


As in \cite{BBKorte} we also give lower bounds for the 
increase in the quasisuperminimizing constant and show
that these lower bounds are the same as in the 1-dimensional
case considered in~\cite{BBKorte}, see Section~\ref{sect-min}.
In Section~\ref{sect-dummy} we show that one
can add dummy variables to these examples, as well
as to those in \cite{BBKorte};
this is nontrivial and partly relies on
results from Bj\"orn--Bj\"orn~\cite{BBtensor}.

\begin{ack}
A.B. and J.B. were supported by the Swedish Research Council
grants 2016-03424 and 621-2014-3974, respectively,
while I.M.
was supported by the SIDA (Swedish International Development
Cooperation Agency)
project 316-2014  
``Capacity building in Mathematics and its applications'' 
under the SIDA bilateral program with the Makerere University 2015--2020.
\end{ack}

\section{Quasi(sub/super)minimizers}
\label{sect-qmin} 

Above we defined what quasiminimizers and quasisuperminimizers
are.
A function $u$ is a \emph{$Q$-quasisubminimizer}
if $-u$ is a $Q$-quasisuperminimizer.
Our definition of quasiminimizers (and quasisub- and quasisuperminimizers)
is one of several equivalent possibilities,
see Proposition~3.2 in A.~Bj\"orn~\cite{ABkellogg}.
In particular, 
we will use that
it is enough to require
\eqref{eqmain} to hold for all (nonnegative) $\phi \in \Lipc(\Om)$,
where $\Lipc(\Om)$ denotes the space of all Lipschitz functions
with compact support in $\Om$.

When $Q=1$ we usually drop ``quasi'' and say
(sub/super)minimizer.
Being a (sub/super)minimizer is the same
as being a (weak) (sub/super)solution of the 
\p-Laplace equation 
\[
    -\Div(|\nabla u|^{p-2} \nabla u)=0,
\]
see Chapter~5 in Heinonen--Kilpel\"ainen--Martio~\cite{HeKiMa}.
The function $u$ is a supersolution of this equation
if the left-hand side is nonnegative in a weak sense.

By Giaquinta--Giusti~\cite[Theorem~4.2]{GG2}, 
a $Q$-quasiminimizer can be
modified on a set of measure zero so that it becomes locally
H\"older continuous in $\Om$.
This continuous $Q$-quasiminimizer is called a
  \emph{$Q$-quasiharmonic} function,
and a \emph{\p-harmonic} function is a continuous minimizer.

If $u$ is a quasisuperminimizer,
then  $au+b$ is also a quasisuperminimizer whenever $a \ge 0$ and $b \in \R$.
Also, $u$ is a $Q$-quasiminimizer if and only if 
it is both a $Q$-quasisubminimizer and a $Q$-quasisuperminimizer.
Quasisuperminimizers are invariant under scaling 
in the following way.

\begin{lem}\label{lem1}
Let $\Om \subset \R^n$ be open
and $\tau>0$.
If $u: \Omega \rightarrow \R^n$  is  a $Q$-quasi\-super\-minimizer in 
$\Omega$,
then $v(x) := u(\tau x)$
is a $Q$-quasisuperminimizer in 
\[
   \Omega_{\tau} := \{ x: \tau x \in \Omega\}.
\]
\end{lem}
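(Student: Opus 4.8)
The plan is a direct change of variables. First I would record that $v\in\Wploc(\Omega_\tau)$: the map $x\mapsto\tau x$ is a bi-Lipschitz bijection of $\Omega_\tau$ onto $\Omega$, so composing $u$ with it preserves local membership in $W^{1,p}$, and the chain rule gives $\nabla v(x)=\tau\,(\nabla u)(\tau x)$ for a.e.\ $x\in\Omega_\tau$.

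By the equivalent formulation recalled above (Proposition~3.2 in \cite{ABkellogg}), it suffices to verify \eqref{eqmain} for $v$ against every nonnegative $\psi\in\Lipc(\Omega_\tau)$. Given such a $\psi$, set $\varphi(y):=\psi(y/\tau)$. Then $\varphi\ge0$, and $\operatorname{supp}\varphi=\tau\operatorname{supp}\psi$ is a compact subset of $\Omega$, so $\varphi\in\Lipc(\Omega)$; moreover $\psi\mapsto\varphi$ is a bijection between the two classes of admissible test functions, and the set $\{\psi\neq0\}$ is exactly the image of $\{\varphi\neq0\}$ under $y\mapsto y/\tau$.

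Next I would substitute $y=\tau x$, so that $dy=\tau^n\,dx$. Since $(v+\psi)(x)=u(\tau x)+\varphi(\tau x)=(u+\varphi)(\tau x)$, the chain rule also gives $\nabla(v+\psi)(x)=\tau\,(\nabla(u+\varphi))(\tau x)$, and hence both integrals in \eqref{eqmain} for $v$ pick up the same factor $\tau^{p}\cdot\tau^{-n}=\tau^{p-n}$:
\[
  \int_{\psi\neq0}|\nabla v|^{p}\,dx=\tau^{p-n}\int_{\varphi\neq0}|\nabla u|^{p}\,dy
\]
and, in the same way,
\[
  \int_{\psi\neq0}|\nabla(v+\psi)|^{p}\,dx=\tau^{p-n}\int_{\varphi\neq0}|\nabla(u+\varphi)|^{p}\,dy.
\]
Applying \eqref{eqmain} for $u$ with the test function $\varphi$ and multiplying through by $\tau^{p-n}>0$ turns this into \eqref{eqmain} for $v$ with the test function $\psi$. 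As $\psi$ was arbitrary, $v$ is a $Q$-quasisuperminimizer in $\Omega_\tau$, with the same constant $Q$.

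There is no genuine obstacle here; the only point that needs care is the bookkeeping of the scaling factors --- checking that the Jacobian factor $\tau^{n}$ and the gradient factor $\tau^{p}$ combine on both sides to the identical power $\tau^{p-n}$, so that $Q$ is genuinely left unchanged --- together with confirming that the correspondence $\psi\leftrightarrow\varphi$ preserves nonnegativity, compact support and the sets over which the integrals are taken.
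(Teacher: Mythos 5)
Your argument is correct and is essentially the same as the paper's proof: a direct change of variables $y=\tau x$ with the transported test function $\varphi(y)=\psi(y/\tau)$, producing the common factor $\tau^{p-n}$ on both sides so that $Q$ is unchanged. The only (immaterial) difference is that you test with nonnegative functions in $\Lipc(\Omega_\tau)$ via the equivalent formulation, whereas the paper tests directly with nonnegative $\phi\in W^{1,p}_0(\Omega_\tau)$.
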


\begin{proof}
Let $\phi \in  W_{0}^{1,p}(\Omega_\tau)$ be nonnegative.
Then $\phit(x):=\phi(x/\tau) \in W_{0}^{1,p}(\Omega)$.
Hence, as $u: \Omega \rightarrow \R^n$  is  a $Q$-quasisuperminimizer in 
$\Omega$,
\begin{align*} 
   \int_{\varphi \neq 0} |\nabla v|^{p}\, dx
   & = \tau^{p-n} \int_{\phit \neq 0} |\nabla u|^{p}\, dx \\
   & \le  Q \tau^{p-n}\int_{\phit \neq 0} |\nabla( u + \phit)|^{p}\, dx
   =   Q \int_{\phi} |\nabla(v + \phi)|^{p}\, dx.
  \end{align*}
Thus $v$ is a $Q$-quasisuperminimizer in $\Omega_{\tau}$.
\end{proof}

The following definition will play a role in Section~\ref{sect-min}.
The finiteness requirement is important for this to be useful,
and is always fulfilled when $\itoverline{G}$ is a compact
subset of $\Om$, by the definition of quasiminimizers.

\begin{deff} \label{deff-determining}
If $u$ is a $Q$-quasiminimizer in $\Om \subset \R^n$ we say that
$u$ has the \emph{maximal \p-energy allowed by $Q$}
on the open set  $G\subset\Om$ if
\[
       \int_G |\nabla u|^{p}\, dx 
    = Q \int_G |\nabla v|^{p} \, dx < \infty,
\]
where $v$ is the minimizer in $G$ with boundary values
$v=u$ on $\bdy G$.
\end{deff}

For further discussion
on quasi(super)minimizers, as well as references to the literature,
we refer to 
Bj\"orn--Bj\"orn~\cite{BBpower}
and 
Bj\"orn--Bj\"orn--Korte~\cite{BBKorte}.
We will mainly be interested 
in radially symmetric functions on $\R^n$, $n \ge 2$.

\medskip

\emph{For the rest of this section,
    as well as in most of Section~\ref{sect-min},
we will only consider the case when $p=n$, the conformal dimension.}

\medskip

Define the annulus
\[
    A_{r_1,r_2}=\{x \in \R^n : r_1 < |x| < r_2\},
    \quad \text{where } 0 \le r_1 < r_2 \le \infty.
\]

In the conformal case ($p=n$) the logarithm
$\log |x|$ is an $n$-harmonic function,
and log-powers are quasiminimizers as we shall see.
These log-powers and their optimal quasisuperminimizing
constants  will be the crucial ingredients in Section~\ref{sect-min},
when investigating the increase in the quasisuperminimizing constant 
for the minimum of two quasisuperminimizers. 
The optimal quasiminimizing
and quasisub/superminimizing
constants for power functions
and log-powers $(-\log|x|)^\al$  on punctured unit balls were obtained in 
Bj\"orn--Bj\"orn~\cite{BBpower}.
These 
are rather easily shown to apply also
to the annuli $A_{\ga,1}$ with $0<\ga<1$, see
Theorem~\ref{thm-qmin-log-power-p=n} below.
We also need to consider log-powers $\log^\alp |x|:=(\log |x|)^\alp$
on the annuli $A_{1,\ga}$, $\ga>1$,
and their optimal quasiminimizing
and quasi\-sub/super\-minimizing
constants provided by the following
result.

\begin{thm} \label{thm-qmin-log-power-p=n-new}
Let $1 < \ga \le \infty$, $\alp>1-1/n$
and $u(x)=\log^\alp |x|$.
Then $u$ is a quasiminimizer in $A_{1,\ga}$
with 
\[ 
	\Qaln= \frac{\alp^n}{n\alp-n+1}
\] 
being the best quasiminimizing constant.

Moreover, $u$ is a $Q$-quasi\/{\rm(}sub\/{\rm/}super\/{\rm)}minimizer 
in\/ $A_{1,\ga}$ as given in Table~\ref{table-p=n-log},
where $Q$ in Table~\ref{table-p=n-log} is the best
quasi\/{\rm(}sub\/{\rm/}super\/{\rm)}\-mi\-ni\-mi\-zing constant.
Furthermore, 
$u$ has the \emph{maximal $n$-energy allowed by $\Qaln$}
on $A_{1,\ga}$ if $\ga <\infty$.
\end{thm}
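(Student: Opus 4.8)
The plan is to deduce the whole theorem from Theorem~\ref{thm-qmin-log-power-p=n} --- the analogous statement for $g(y):=(-\log|y|)^\alp$ on the annuli $A_{\ga',1}$ with $0\le\ga'<1$, which in turn comes from \cite{BBpower} --- by pulling back along the conformal inversion $T(x):=x/|x|^2$. The decisive point is that in the conformal case $p=n$ the $n$-energy $\int|\grad u|^n\,dx$ is invariant under conformal diffeomorphisms, and hence so is the property of being a $Q$-quasi\/(sub\//super\/)minimizer, \emph{with the same constant} $Q$; the admissible test functions and the minimizers with prescribed boundary data are likewise preserved.

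Concretely, I would first record that $T$ is a conformal involution of $\R^n\setm\{0\}$ with conformal factor $\lambda(x)=1/|x|^2$, so that $\lambda(x)^n=|\det DT(x)|$, that $|T(x)|=1/|x|$, and that $T$ maps $A_{1,\ga}$ onto $A_{1/\ga,1}$ (with the convention $1/\infty=0$, so $\ga=\infty$ gives the punctured unit ball of \cite{BBpower}). Since $g(T(x))=(-\log(1/|x|))^\alp=\log^\alp|x|=u(x)$, we have $u=g\circ T$ and $|\grad u(x)|=\lambda(x)\,|\grad g(T(x))|$. Then for every open $G\subset A_{1,\ga}$ the change of variables $y=T(x)$ gives
\[
   \int_{\phi\neq0}|\grad u|^n\,dx=\int_{\phit\neq0}|\grad g|^n\,dy
   \quad\text{and}\quad
   \int_{\phi\neq0}|\grad(u+\phi)|^n\,dx=\int_{\phit\neq0}|\grad(g+\phit)|^n\,dy,
\]
with no leftover Jacobian factor precisely because $p=n$; here $\phi\mapsto\phit:=\phi\circ T^{-1}$ is a bijection between the nonnegative functions in $W^{1,n}_0(G)$ and those in $W^{1,n}_0(T(G))$, and $T(A_{1,\ga})=A_{1/\ga,1}$. (In the same way $u\in W^{1,n}\loc(A_{1,\ga})$ is equivalent to $g\in W^{1,n}\loc(A_{1/\ga,1})$, which holds exactly when $\alp>1-1/n$.) It follows that $u$ is a $Q$-quasi\/(sub\//super\/)minimizer in $A_{1,\ga}$ if and only if $g$ is one in $A_{1/\ga,1}$, and the best such constants coincide. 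Applying Theorem~\ref{thm-qmin-log-power-p=n} to $g$ on $A_{1/\ga,1}$ (legitimate since $0\le1/\ga<1$ and $\alp>1-1/n$) then yields $\Qaln=\alp^n/(n\alp-n+1)$ as the best quasiminimizing constant and the entries of Table~\ref{table-p=n-log} as the best quasisub/super\-minimizing constants.

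For the maximal-energy assertion I would assume $\ga<\infty$, so $A_{1/\ga,1}$ is a bounded annulus, and let $v$ be the minimizer in $A_{1,\ga}$ with $v=u$ on $\bdy A_{1,\ga}$. Since being a minimizer is the case $Q=1$ of the equivalence above, $v\circ T^{-1}$ is the minimizer in $A_{1/\ga,1}$ with boundary values $g$ on $\bdy A_{1/\ga,1}$, and $\int_{A_{1,\ga}}|\grad u|^n\,dx=\int_{A_{1/\ga,1}}|\grad g|^n\,dy$ while $\int_{A_{1,\ga}}|\grad v|^n\,dx=\int_{A_{1/\ga,1}}|\grad(v\circ T^{-1})|^n\,dy$. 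Hence the identity $\int_{A_{1,\ga}}|\grad u|^n\,dx=\Qaln\int_{A_{1,\ga}}|\grad v|^n\,dx<\infty$ --- i.e.\ that $u$ has the maximal $n$-energy allowed by $\Qaln$ on $A_{1,\ga}$, cf.\ Definition~\ref{deff-determining} --- is equivalent to the corresponding statement for $g$ on $A_{1/\ga,1}$, which is part of Theorem~\ref{thm-qmin-log-power-p=n}.

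The main point to get right is the conformal change of variables: that $\lambda(x)^n$ is exactly the Jacobian of $T$ (so the $n$-energies match with no surplus factor --- this is where $p=n$ is used), that $\phi\mapsto\phi\circ T^{-1}$ preserves nonnegativity, compact support and membership in $W^{1,n}_0$ on corresponding subdomains, and that $n$-harmonicity is conformally invariant so that minimizers with prescribed boundary data correspond under $T$. I would also check that Theorem~\ref{thm-qmin-log-power-p=n}, as stated, already contains both the full quasisub/super\-minimizing table and, on bounded annuli, the maximal-$n$-energy property; if \cite{BBpower} treats only the punctured ball, the (reputedly easy) passage to $A_{\ga',1}$ with $0<\ga'<1$ should be recorded there. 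Apart from this bookkeeping there is no genuine obstacle, since the substantive work --- computing the optimal constants for log-powers --- is already carried out in \cite{BBpower}.
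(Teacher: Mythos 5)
Your proposal is correct, but it takes a genuinely different route from the paper. At $p=n$ the inversion $T(x)=x/|x|^2$ is indeed conformal, with $DT(x)$ equal to $|x|^{-2}$ times an orthogonal matrix and $|\det DT(x)|=|x|^{-2n}$, so the $n$-energy, the (nonnegative) test-function classes on corresponding subdomains, and minimizers with prescribed boundary data all transfer between $A_{1,\ga}$ and $A_{1/\ga,1}$; since $\log^\alp|x|=(-\log|T(x)|)^\alp$, Theorem~\ref{thm-qmin-log-power-p=n} then delivers the optimal quasiminimizing constant, the whole of Table~\ref{table-p=n-log}, and the maximal-energy statement with the same constants. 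The paper instead proves Theorem~\ref{thm-qmin-log-power-p=n-new} directly by adapting the computation of \cite{BBpower}: explicit radial energies, the quotient $k(S)$ of \eqref{eq-quotient-k(S)} together with the halving argument giving $\sup_{S>1}k(S)=\Qaln$, decomposition of an arbitrary one-dimensional test set into intervals, a polar-coordinate (Fubini) reduction from radial competitors to general ones, and Lemma~\ref{lem-sub-super-log} (the sign of the $n$-Laplacian) to settle the sub/super columns of the table. Your route buys brevity and a conceptual explanation of why the two log-power families have identical constants; its cost is that it leans on Theorem~\ref{thm-qmin-log-power-p=n} for the annuli $A_{\ga',1}$ with $0<\ga'<1$ (needed whenever $\ga<\infty$, in particular for the best-constant and maximal-energy claims there), which is not literally in \cite{BBpower} (only the punctured ball $\ga'=0$ is); in the paper that extension is itself justified by rerunning the \cite{BBpower} argument, with a cross-reference to the proof of Theorem~\ref{thm-qmin-log-power-p=n-new}, so the direct computation is relocated rather than eliminated --- exactly the bookkeeping you flag, which must then be carried out independently of the theorem being proved to avoid circularity. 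Two small corrections: $u$ is smooth in the open annulus, so $u\in W^{1,n}\loc(A_{1,\ga})$ for every $\alp$; the restriction $\alp>1-1/n$ enters through finiteness of the energy near $|x|=1$ (equivalently finiteness of $\Qaln$), not through local Sobolev membership. Also, for $\ga=\infty$ the transfer of the test class is cleanest if you test with nonnegative $\phi\in\Lipc$, as the paper's Section~\ref{sect-qmin} permits, rather than with general $W^{1,n}_0$ functions.
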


\begin{table}[t]
\begin{center}
\begin{tabular}{|c||c|c|c|} 
\hline
& quasi- & quasi- & quasi- \\
 & minimizer & subminimizer &  superminimizer  \\
\hline \hline
$\displaystyle 1-\frac{1\strutheight}{n\strutdepth}<\al<1$ 
& $Q=\Qaln$ & $Q=\Qaln$ & $Q=1$   \\ \hline
$\displaystyle \al=1 \strutheight \strutdepth$
& $Q=1$ & $Q=1$  & $Q=1$   \\ \hline
$\displaystyle \al>1 \strutheight \strutdepth$
& $Q=\Qaln$ & $Q=1$ & $Q=\Qaln$   \\ \hline
\end{tabular}
\end{center}
\caption{Optimal quasiminimizing and
  quasisub/superminimizing constants of 
$\log^\alp |x|$ in $A_{1,\ga}$
and 
$(-\log |x|)^\al$ in $A_{\ga,1}$,
  as provided by Theorems~\ref{thm-qmin-log-power-p=n-new}
  and~\ref{thm-qmin-log-power-p=n}.
}
\label{table-p=n-log}
\end{table}

The proof is a modification of the proofs 
of Theorems~7.3 and~7.4 in \cite{BBpower}.
For the reader's convenience we provide the details.

\begin{proof}
Let $\phi(r)=\log^\alp r$,
$1<r_1<r_2<\ga$, $G=(r_1,r_2)$,
$s_1=\log r_1$, $s_2=\log r_2$ and $S=s_2/s_1>1$.

The $n$-energy of $u$ in $A_{r_1,r_2}$
is given by 
\[ 
I_u(A_{r_1,r_2})  \coloneqq \int _{\Omega}|\nabla u|^{n} \,dx
  = c_{n -1}\int_{r_1}^{r_2}|\varphi^{\prime}(r)|^{n} r^{n - 1} \,dr
   =: c_{n-1}\hat{I}_{\varphi}(G), 
\]
where $c_{n -1}$ is the $(n-1)$-dimensional surface area on the sphere 
$\mathbf{S}^{n-1}$. 
Moreover
\begin{align}
\label{eq-Iphi}
   \Ih_\phi(G)
    &  = \int_{r_1}^{r_2} \alp^n \frac{(\log r)^{n\alp-n}}{r^n} r^{n-1}\, dr
      = \int_{s_1}^{s_2} \alp^n t^{n\alp-n} \,dt
\\
    &  = \Qaln     ( s_2^{n\alp-n+1} - s_1^{n\alp-n+1})
     = \Qaln s_1^{n\alp-n+1}(S^{n\alp-n+1}-1).
   \nonumber
\end{align}
A minimizer is given by $\psi(r)=\log r$, and we have
letting $\alp=1$ above,
\[
   \Ih_\psi(G) = s_1 (S-1).
\]
We want to compare the energy $\Ih_\phi(G)$ with the energy $\Ih_\eta(G)$
  of the minimizer $\eta=a\psi+b$ having the same boundary values on 
$\bdy G$ as $\phi$. 
As
\[
     a=\frac{s_2^\alp-s_1^\alp}{s_2-s_1}=s_1^{\alp-1} \frac{S^\alp-1}{S-1},
\]
their quotient is
\begin{equation}    \label{eq-quotient-k(S)}
k(S)   :=   \frac{\Ih_\phi(G)}{\Ih_\eta(G)}
=   \frac{\Ih_\phi(G)}{|a|^n\Ih_\psi(G)}
=  \Qaln 
    \frac{S^{n\alp-n+1}-1}{S-1}   \biggl(\frac{S-1}{S^\alp -1}\biggr)^n,
\end{equation}
which only depends on $S$.

Let $s=\sqrt{s_1 s_2}$ and let $\eta_1$ and $\eta_2$
be the minimizers of the $\Ih$-energy 
on $G_1=(e^{s_1},e^s)$ resp.\  $G_2=(e^s,e^{s_2})$
having the same boundary values as $\phi$ on $\bdy G_1$ resp.\ $\bdy G_2$.
Also let $\etat=\eta_1 \chi_{G_1} + \eta_2 \chi_{G \setm G_1}$.
Then, as $s/s_1=s_2/s=\sqrt{S}$, we see that
\begin{align*}
    \Ih_\phi(G)
      &= k(S) \Ih_\eta(G) 
      \le k(S) \Ih_\etat(G) 
      = k(S) (\Ih_{\eta_1}(G_1)+\Ih_{\eta_2}(G_2)) \\
      &=  k(S) \biggl(\frac{\Ih_{\phi}(G_1)}{k(\sqrt{S})}+
           \frac{\Ih_{\phi}(G_2)}{k(\sqrt{S})} \biggr) 
      =  \frac{k(S)}{k(\sqrt{S})} \Ih_{\phi}(G).
\end{align*}
As $0 <\Ih_{\phi}(G) < \infty$, we find that $k(S) \ge k(\sqrt{S})$,
and thus
\begin{equation} \label{eq-sup}
   \sup_{S> 1} k(S)=\lim_{S \to \infty} k(S)= \Qaln.
\end{equation}
Comparing $u$ with $x \mapsto \eta(|x|)$ shows that
the quasiminimizing constant for $u$ cannot be less than $\Qaln$.

To show that $\Qaln$ will do, let $\omega$ be a function such 
that $\omega - \phi \in \Lipc((1,\ga))$.
The open set 
\[
 V=\{x \in (1,\ga) : \omega(x) \ne \phi(x)\}
\]
 can be written as a countable (or finite)  union
of pairwise disjoint intervals $\{I_j\}_{j}$.
We find from \eqref{eq-quotient-k(S)} and~\eqref{eq-sup} that
\begin{equation}
     \Ih_\phi(V) = \sum_j \Ih_\phi(I_j)
          \le \sum_j \Qaln\Ih_\omega(I_j)
          = \Qaln\Ih_\omega(V).
\label{eq-Ih-phi-le-Ih-om}
\end{equation}
Hence $\phi$ is indeed a $\Qaln$-quasiminimizer for the energy
$\Ih$ on $(1,\ga)$.

Next, we turn to $u$.
Let $v$ be such that
$v-u \in \Lipc(A_{1,\ga})$.
Also let 
\[
    \Om=\{x \in A_{1,\ga}: v(x) \ne u(x)\}.
\]
Using polar coordinates $x=(r,\theta)$,
where $r>1$ and $\theta  \in \Sphere^{n-1}$,
let 
\[
V_\theta=\{r : (r,\theta) \in \Om\}
\quad \text{and} \quad
v_\theta(r)=v(r,\theta).
\]
We then find, applying \eqref{eq-Ih-phi-le-Ih-om} to $G=V_\theta$, that
\begin{align*}
    I_u(\Om) &= \int_{\Sphere^{n-1}} \Ih_\phi(V_\theta) \, d\theta
             \le \int_{\Sphere^{n-1}} \Qaln\Ih_{v_\theta}(V_\theta) \, d\theta \\
             &=  \Qaln\int_{\Om} \biggl|\frac{\bdy v}{\bdy r}\biggr|^n  \, dx
             \le \Qaln  \int_{\Om} |\grad v|^n  \, dx
             = \Qaln I_v(\Om),
\end{align*}
showing that $u$ is indeed a $\Qaln$-quasiminimizer in $A_{1,\ga}$.

It follows directly that the constants in the quasiminimizer column in
Table~\ref{table-p=n-log} are correct.
By Lemma~\ref{lem-sub-super-log} below,
$u$ is a subminimizer if $\alp \ge 1$ and
a superminimizer if $1-1/n < \alp  \le 1$.
As $u$ is a $Q$-quasiminimizer if and only if 
it is both a $Q$-quasisubminimizer and a $Q$-quasisuperminimizer,
it follows that $\Qaln$ is 
the optimal quasisubminimizing constant when $1-1/n < \alp  \le 1$,
and the optimal quasisuperminimizing constant when $\alp  \ge 1$.

Finally, if $\ga < \infty$, then  it follows from \eqref{eq-Iphi}
that 
\[
     \Ih_\phi(A_{1,\ga})
      = \Qaln   (\log \ga)^{n\alp-n+1}
\]
and that the minimizer with the same boundary values
has $n$-energy $(\log \ga)^{n\alp-n+1}$, i.e.\
$u$ has the \emph{maximal $n$-energy allowed by $\Qaln$}
on $A_{1,\ga}$.
\end{proof}

\begin{lem} \label{lem-sub-super-log}
Let $u(x)=\log^\alp |x|$. 
Then $u$ is a superminimizer  in $A_{1,\infty}$ 
if and only if\/ $0 \le \alp \le 1$.
Similarly,   $u$ is a subminimizer  in $A_{1,\infty}$
if and only if $\alp \le0$ or $\alp \ge 1$.
\end{lem}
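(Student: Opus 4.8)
The plan is to reduce everything to a one-dimensional computation, using that $u$ is radially symmetric and smooth on the open annulus $A_{1,\infty}$ (where $\log|x|>0$), with gradient that never vanishes when $\alp\neq0$. Write $u(x)=\phi(|x|)$ with $\phi(r)=(\log r)^\alp$; on any compact subset of $A_{1,\infty}$ the variable $r$ stays in a compact subinterval of $(1,\infty)$, so $u$ is smooth there and hence $u\in\Wploc(A_{1,\infty})$ for every real $\alp$. Since being a (sub/super)minimizer is the same as being a weak (sub/super)solution of the $n$-Laplace equation (see \cite{HeKiMa}), and since a function which is smooth on an open set is a weak supersolution there exactly when $-\Div(|\nabla u|^{n-2}\nabla u)\ge0$ holds pointwise, I first record the radial form: for $F=|\nabla u|^{n-2}\nabla u=|\phi'(r)|^{n-2}\phi'(r)\,x/|x|$ one has
\[
   -\Div F = -\frac{1}{r^{n-1}}\,\frac{d}{dr}\bigl(r^{n-1}|\phi'(r)|^{n-2}\phi'(r)\bigr) =: -\frac{g'(r)}{r^{n-1}}.
\]
Hence $u$ is a superminimizer in $A_{1,\infty}$ if and only if $g$ is nonincreasing on $(1,\infty)$, and a subminimizer if and only if $g$ is nondecreasing there. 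The case $\alp=0$ is trivial, as then $u\equiv1$, so from now on assume $\alp\neq0$.

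The second step is to compute $g$. On $(1,\infty)$ we have $\phi'(r)=\alp(\log r)^{\alp-1}/r$, which is nowhere zero and of constant sign on $(1,\infty)$ (namely that of $\alp$), so the map $s\mapsto|s|^{n-2}s$ is evaluated away from the origin and $g$ is smooth. Using $\log r>0$, the powers of $r$ cancel and
\[
   g(r)=r^{n-1}\bigl|\alp(\log r)^{\alp-1}/r\bigr|^{n-2}\,\alp(\log r)^{\alp-1}/r =|\alp|^{n-2}\alp\,(\log r)^{(\alp-1)(n-1)},
\]
so that
\[
   g'(r)=|\alp|^{n-2}\alp(\alp-1)(n-1)\,(\log r)^{(\alp-1)(n-1)-1}\,\frac1r.
\]
Since $|\alp|^{n-2}>0$ (for $\alp\neq0$), $n-1>0$, $(\log r)^{(\alp-1)(n-1)-1}>0$ and $1/r>0$ on $(1,\infty)$, the sign of $g'(r)$ is constant on $(1,\infty)$ and equal to the sign of $\alp(\alp-1)$.

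It remains to read off the conclusion. Thus $g$ is nonincreasing on $(1,\infty)$ if and only if $\alp(\alp-1)\le0$, i.e.\ $0\le\alp\le1$ (reinstating $\alp=0$ costs nothing), which by the first step is exactly the condition for $u$ to be a superminimizer; and $g$ is nondecreasing if and only if $\alp(\alp-1)\ge0$, i.e.\ $\alp\le0$ or $\alp\ge1$, which is the condition for $u$ to be a subminimizer. This gives both equivalences. I do not anticipate a real obstacle: the only points needing a little care are the standard passage between the weak and the pointwise supersolution inequalities on the open set where $u$ is smooth, and the sign bookkeeping when $\alp<0$ (where $\phi'<0$ and $|\phi'|^{n-2}\phi'=-|\phi'|^{n-1}$), both of which are routine.
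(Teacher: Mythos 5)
Your proof is correct and follows essentially the same route as the paper: both reduce to the pointwise sign of $-\Div(|\nabla u|^{n-2}\nabla u)$ on $A_{1,\infty}$, and your radial quantity $-g'(r)/r^{-(n-1)}$ computation reproduces exactly the paper's formula $\alp(1-\alp)|\alp|^{n-2}(n-1)(\log|x|)^{(n-1)\alp-n}|x|^{-n}$, whose sign is that of $\alp(1-\alp)$. The extra care you take with the weak--pointwise equivalence and the cases $\alp=0$, $\alp<0$ is fine but not a different method.
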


\begin{proof}
A straightforward calculation shows that
\[
    -\Div(|\nabla u(x)|^{n-2}\nabla u(x))
    = \alp (1-\alp) |\alp|^{n-2}  (n-1)
	(\log |x|)^{(n-1)\alp -n} |x|^{-n}
\]
for $x\in A_{1,\infty}$.
The sign of this expression is the same as of $\alp(1-\alp)$.
The function $u$ is, by definition,
a superminimizer if this expression is nonnegative, and
a subminimizer if it is nonpositive throughout $A_{1,\infty}$,
which thus happens exactly as stated.
\end{proof}

We also need the following result, which
is essentially from Bj\"orn--Bj\"orn~\cite{BBpower}.

\begin{thm} \label{thm-qmin-log-power-p=n}
\textup{(\cite[Theorems~7.3 and~7.4]{BBpower})}
Let $0 \le \ga < 1$, $\alp>1-1/n$
and $u(x)=(-\log |x|)^\alp$.
Then $u$ is a quasiminimizer in $A_{1,\ga}$
with 
\[ 
	\Qaln= \frac{\alp^n}{n\alp-n+1}
\] 
being the best quasiminimizing constant.

Furthermore, $u$ is a $Q$-quasi\/{\rm(}sub\/{\rm/}super\/{\rm)}minimizer 
in\/ $A_{\ga,1}$ as given in Table~\ref{table-p=n-log},
where $Q$ in Table~\ref{table-p=n-log} is the best
quasi\/{\rm(}sub\/{\rm/}super\/{\rm)}\-mi\-ni\-mi\-zing constant.
Also, 
$u$ has the \emph{maximal $n$-energy allowed by $\Qaln$}
on $A_{\ga,1}$ if $0 < \ga <1$.

\end{thm}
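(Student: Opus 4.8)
The plan is to deduce this from Theorem~\ref{thm-qmin-log-power-p=n-new} by the inversion $T(x)=x/|x|^2$, exploiting that for $p=n$ the $n$-Dirichlet energy is a conformal invariant. The map $T$ is a conformal involution of $\R^n\setm\{0\}$ with $|T(x)|=1/|x|$, so, writing $\ga'\coloneqq 1/\ga\in(1,\infty]$, it carries $A_{1,\ga'}$ diffeomorphically onto $A_{\ga,1}$ (the endpoint case $\ga=0$, $\ga'=\infty$ being the punctured unit ball $A_{0,1}$, corresponding to $A_{1,\infty}$). Since $T\circ T=\mathrm{id}$, the pullback of $u_0(x)\coloneqq\log^\alp|x|$ by $T$ is
\[
 u_0(T(x))=\log^\alp|T(x)|=(\log(1/|x|))^\alp=(-\log|x|)^\alp=u(x),
\]
so $u=u_0\circ T$ on $A_{\ga,1}$, and $u_0$ on $A_{1,\ga'}$ is covered by Theorem~\ref{thm-qmin-log-power-p=n-new} since $\alp>1-1/n$. (In particular the case $\ga=0$ is exactly \cite[Theorems~7.3 and~7.4]{BBpower}.)

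The two transformation rules I need are standard: if $\Phi$ is $C^1$ and conformal, say $D\Phi=\lambda O$ with $O$ orthogonal and $\lambda>0$, then $|\nabla(w\circ\Phi)(x)|=\lambda(x)|\nabla w(\Phi(x))|$ and $|\det D\Phi(x)|=\lambda(x)^n$, so a change of variables gives
\begin{gather*}
 \int_E|\nabla(w\circ\Phi)|^n\,dx=\int_{\Phi(E)}|\nabla w|^n\,dy, \\
 \int_E|\nabla(w\circ\Phi)|^{n-2}\nabla(w\circ\Phi)\cdot\nabla(\eta\circ\Phi)\,dx
  =\int_{\Phi(E)}|\nabla w|^{n-2}\nabla w\cdot\nabla\eta\,dy.
\end{gather*}
Applying these with $\Phi=T$: the map $\phi\mapsto\phi\circ T$ is a bijection of $\Lipc(A_{\ga,1})$ onto $\Lipc(A_{1,\ga'})$ preserving nonnegativity and sending $\{\phi\neq 0\}$ to $T(\{\phi\neq 0\})$, so the defining inequality \eqref{eqmain} for $u_0$ on $A_{1,\ga'}$ transports verbatim, with the same constant, to $u$ on $A_{\ga,1}$ (for quasiminimizers, quasisubminimizers and quasisuperminimizers alike), and optimality of each constant transports back the same way. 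The second identity shows that weak (sub/super)solutions of the $n$-Laplace equation are also carried by $T$, so the sign computation in Lemma~\ref{lem-sub-super-log} applies to $u=u_0\circ T$ on $A_{0,1}$, and hence, by restriction, on each $A_{\ga,1}$: $u$ is a superminimizer precisely when $0\le\alp\le1$ and a subminimizer precisely when $\alp\le0$ or $\alp\ge1$. Combining the $\Qaln$-quasiminimizer property with these sign facts exactly as at the end of the proof of Theorem~\ref{thm-qmin-log-power-p=n-new}, and using that a $Q$-quasiminimizer is precisely a function that is both a $Q$-quasisub- and a $Q$-quasisuperminimizer, fills in every entry of Table~\ref{table-p=n-log}.

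For the maximal $n$-energy claim, assume $0<\ga<1$, so $\ga'<\infty$. Under $T$ the minimizer in a subannulus with prescribed boundary data corresponds to the minimizer in the image subannulus with the transported boundary data, because minimizers are the $1$-quasiminimizers already handled above and the competitor classes $\{w:w-u_0\in\Lipc\}$ correspond under $w\mapsto w\circ T$; since the $n$-energy transports exactly, the computation \eqref{eq-Iphi} read on $A_{1,\ga'}$ yields that $u$ has the maximal $n$-energy allowed by $\Qaln$ on $A_{\ga,1}$ (for $\ga=0$ that energy is infinite, as in the case $\ga=\infty$ of Theorem~\ref{thm-qmin-log-power-p=n-new}, so nothing is claimed). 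As a self-contained alternative one may instead simply rerun the one-dimensional argument in the proof of Theorem~\ref{thm-qmin-log-power-p=n-new} with $\phi(r)=(-\log r)^\alp$ and the substitution $t=-\log r$: the energy \eqref{eq-Iphi} and the quotient $k(S)$ in \eqref{eq-quotient-k(S)} take the same form, $k$ is again nondecreasing in $S$ with $\sup_{S>1}k(S)=\Qaln$, and the summation \eqref{eq-Ih-phi-le-Ih-om} together with the polar-coordinate slicing carry over unchanged. I expect the only genuinely delicate point to be the bookkeeping: keeping track of the correspondence $\ga\leftrightarrow 1/\ga$ and the sign conventions of Lemma~\ref{lem-sub-super-log}, and checking that $T$ really does send compactly supported test functions to compactly supported test functions in the image annulus (it does, since each $A_{\ga,1}$ with $\ga>0$ is bounded away from $0$ and from $\bdy B(0,1)$, and compact subsets of $A_{0,1}$ are bounded away from both, so that $T$ is bi-Lipschitz on the relevant sets).
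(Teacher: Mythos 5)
Your proof is correct, but it takes a genuinely different route from the paper. The paper's own proof is a short deduction from the source it cites: it invokes \cite[Theorems~7.3 and~7.4]{BBpower} for $\ga=0$ and simply observes that the proofs there go through verbatim for $0<\ga<1$, because the comparison parameter $S$ still ranges over all of $(1,\infty)$ (this is exactly your ``self-contained alternative'' of rerunning the one-dimensional argument with $\phi(r)=(-\log r)^\alp$ and $t=-\log r$), and it obtains the maximal-$n$-energy statement from the explicit energy formula in \cite{BBpower}, as at the end of the proof of Theorem~\ref{thm-qmin-log-power-p=n-new}. Your main argument instead derives the result from Theorem~\ref{thm-qmin-log-power-p=n-new} via the inversion $T(x)=x/|x|^2$, using that for $p=n$ the $n$-energy is conformally invariant, that $T$ is an involution interchanging $A_{\ga,1}$ and $A_{1,1/\ga}$ with $\log^\alp|\cdot|\circ T=(-\log|\cdot|)^\alp$, and that $\phi\mapsto\phi\circ T$ is an energy-preserving bijection of the admissible test classes; the sub/superminimizer entries and the maximal-energy claim transport as well, as you check. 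This reduction is sound (no circularity: Theorem~\ref{thm-qmin-log-power-p=n-new} is proved independently), and it buys a unified treatment in which the two log-power theorems are literally equivalent, at the price of the bi-Lipschitz/test-function bookkeeping and the minimizer-correspondence argument for the maximal-energy statement, which you handle correctly; the paper's route avoids all of that but leans on \cite{BBpower}. One cosmetic point you silently fixed: the first part of the statement should read $A_{\ga,1}$ rather than $A_{1,\ga}$, since $0\le\ga<1$.
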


\begin{proof}
When $\ga=0$, the first part 
follows from Theorem~7.3 in \cite{BBpower}.
The proof therein holds equally well when $\ga>0$
as $S$ therein still ranges over $1<S<\infty$,
cf.\ the proof of Theorem~\ref{thm-qmin-log-power-p=n-new} above.

Similarly, the second part follows from Theorem~7.4 in \cite{BBpower},
where again the arguments hold also when $\ga >0$.
Finally, the last part follows from the formula at the bottom
of p.~314 in \cite{BBpower}, cf.\ the end of 
the proof of Theorem~\ref{thm-qmin-log-power-p=n-new} above.
\end{proof}

\section{The increase in the 
quasisuperminimizing constant}
\label{sect-min}

In this section we are going to use the log-powers
considered in Section~\ref{sect-qmin} to construct
higher-dimensional analogues of the examples
in Bj\"orn--Bj\"orn--Korte~\cite[Section~3]{BBKorte},
concerning the optimality of
\eqref{eq-def-q-for-min} in Theorem~\ref{thm-BBK}.

Fix $n \ge 2$.
We 
study quasisuperminimizers
on the annulus 
\[ 
   A:=A_{1/e,1}=\{x \in \R^n : 1/e < |x|<1\}.
\]
  
As before, we let $p=n$ be the conformal dimension.

\begin{example} \label{ex-log-powers}
Given $Q>1$, there are exactly two exponents $1-1/n<\al'<1<\al$
such that $Q=Q_{\al,n}=Q_{\al',n}$, where 
\begin{equation} \label{eq-def-Q-al}
	\Qaln= \frac{\alp^n}{n\alp-n+1}.
\end{equation}
This is easily shown by differentiating~\eqref{eq-def-Q-al} 
with respect to $\al$ and noting
that the derivative is negative for $\al<1$ and positive for $\al>1$,
and that $Q_\al\to\infty$ as $\al\to 1-1/n$ and as $\al\to\infty$.

We let 
\begin{equation}    \label{eq-def-uQ-ubQ}
u_Q(x)=\log^\al|ex| \quad \text{and} \quad \ub_Q(x)=1-(-\log|x|)^{\al'}.
\end{equation}
Then $u_Q(y)=\ub_Q(y)=0$ if $|y|=1/e$, and $u_Q(y)=\ub_Q(y)=1$ if $|y|=1$.
By Theorem~\ref{thm-qmin-log-power-p=n-new} and
Lemma~\ref{lem1}, $u_Q$ is a subminimizer and a $Q$-quasisuperminimizer 
in $A$.
The same is true for $\ub_Q$ by Theorem~\ref{thm-qmin-log-power-p=n}.
\end{example}

It follows from 
Theorems~\ref{thm-qmin-log-power-p=n-new}
and~\ref{thm-qmin-log-power-p=n}
that $u_Q$ has the maximal $n$-energy allowed by $Q$ on each 
annulus $A_{1/e,\ga}$, $1/e < \ga \le 1$, 
while $\ub_Q$ has the maximal $n$-energy 
allowed by $Q$ on each annulus $A_{\ga,1}$, $1/e \le  \ga  <1$.
This will be of crucial importance when proving
  Theorem~\ref{thm0}, which we will do now.

\begin{proof}[Proof of Theorem~\ref{thm0}]
Let $1-1/n < \al_2<1 < \al_1$
be such that $Q_1=Q_{\al_1,n}$ and  $Q_2=Q_{\al_2,n}$ as in \eqref{eq-def-Q-al},
and let $u_1:=u_{Q_1}$ and $u_2:=\ub_{Q_2}$ be the corresponding quasiminimizers
as in \eqref{eq-def-uQ-ubQ}.
By Example~\ref{ex-log-powers},
$u_j$ is a 
subminimizer and a $Q_j$-quasisuperminimizer in $A$, $j=1,2$.

By Theorem~\ref{thm-BBK}, the
function $u:= \min\{u_1, u_2\}$ is a $\bQ$-quasisuperminimizer in
$A$ with the quasisuperminimizing constant 
$\bQ$ given by \eqref{eq-def-q-for-min}.
Let $v=\log |ex|$, which is the minimizer on $A$
with the same boundary values as $u$, $u_1$ and $u_2$.
As $u_1$ and $u_2$ are subminimizers they are less than $v$, which
can also be seen directly.
Thus $u < v$ in $A$.

We are going to show that $u$ is not a
$Q_2$-quasisuperminimizer on $A$.
As $u <v$, 
to do this it suffices to show that the $n$-energy
\[
 I_u:= \int_{A} |\nabla u |^n \, dx 
  > Q_2I_v,
\]
where
\begin{equation} \label{eq-Iv}
    I_v= \int_{A} |\nabla v|^n \, dx 
      = c_{n-1} \int_{1/e}^1 r^{-n} r^{n-1}\, dr
      = c_{n-1}.
\end{equation}

For convenience, write $u_j(r)=u_j(x)$ when $r=|x|$.
There is a unique number $r_0 \in (1/e, 1)$
such that $u_1(r_0)= u_2(r_0)$ (see below), i.e.\ such that
\begin{equation}\label{eqlem1}
          1 = (-\log r_0)^{\alpha_2} +  \log^{\alpha_1} er_0.
\end{equation}

To see that there is a unique solution,  we consider $w = u_2 -u_1$
and note that $w(1/e) = w(1) =0$. Since
$w^{\prime}(1/e)> 0$ and $w^{\prime}(1)=\infty$, there is at
least one $r\in (1/e, 1)$ such that $w(r) = 0$. 
Moreover,
$w^{\prime}(r) = 0$ if and only if
\[
f(r) \coloneqq \log (er)(-\log r)^{\beta} =
\biggl(\frac{\alpha_1}{\alpha_2}\biggr)^{1/(1 -\alpha_1)} >0,
\quad \text{where } \beta = \frac{1 - \alpha_2}{\alpha_1 -  1}> 0.
\]
Note that $f'(r) > 0$ if and only if 
$0 < r < e^{-\beta/(\beta + 1)}$  
and that $f(r)$ attains its maximum at
(and only at) $r = e^{-\beta/(\beta + 1)}$. 
Since
$f(1/e) = f(1) = 0$, there are
at most two solutions to $w^{\prime}(r) =0$, and thus there can be
at most one solution to \eqref{eqlem1} which must lie in between the
two local extrema of $w$.

Since $u_2$ is a subminimizer in
$A$ we see that 
\[
 \int_{A_{1/e,r_0}}|\nabla u_1|^n \, dx 
   >  \int_{A_{1/e,r_0}}|\nabla u_2|^n \, dx,
\]
where the strict inequality follows from the uniqueness of solutions to
obstacle problems 
(see e.g.\ Bj\"orn--Bj\"orn~\cite[Theorem~7.2]{BBbook})
and from the fact that $u_1$ and $u_2$ differ on a
set of positive measure. 
Hence 
\[ 
 \int_{A}|\nabla u|^n \, dx 
=  \int_{A_{1/e,r_0}}|\nabla u_1|^n \, dx 
   +  \int_{A_{r_0,1}}|\nabla u_2|^n \, dx
   >  \int_{A}|\nabla u_2|^n \, dx
= Q_2 I_v, 
\]
where the last equality follows from the fact that 
$u_2$ 
has the maximal $n$-energy allowed by $Q_2$ on $A$.
As $Q_2 \ge Q_1$ this concludes the proof.
\end{proof}

Theorem~\ref{thm0} shows that in general there is an increase in
the quasisuperminimizing constant when taking the minimum of two
quasiminimizers  but does not give any quantitative estimate of the
increase. 
Next, we are going to analyse the construction
in the proof of Theorem~\ref{thm0} more carefully to
get explicit lower bounds for the increase 
in the quasisuperminimizing constant.

Given $Q_1, Q_2 > 1$ and $p=n \ge 2$,  
 let  $1 - 1/n < \alpha_2 < 1 < \alpha_1$ 
be such that $Q_1 = Q_{\alpha_1,n}$ and $Q_2 = Q_{\alpha_2,n}$ 
as in \eqref{eq-def-Q-al},
and let $u_1:=u_{Q_1}$ and $u_2:=\ub_{Q_2}$ be the corresponding quasiminimizers
as in \eqref{eq-def-uQ-ubQ}.
Let $r_0 \in (1/e, 1)$ be as in \eqref{eqlem1}. 
Contrary to Theorem~\ref{thm0} we here allow for $Q_1 > Q_2$
(the assumption $Q_1 \le Q_2$ in Theorem~\ref{thm0} is only used
  at the very end of its proof).

It follows from Theorems~\ref{thm-qmin-log-power-p=n}
and~\ref{thm-qmin-log-power-p=n-new} that
$u_1=u_{Q_1}$ 
has the maximal $n$-energy allowed by $Q_1$ on 
$A_{1/e,r_0}$, 
while $u_2=\ub_{Q_2}$ has the maximal $n$-energy 
allowed by $Q_2$ on $A_{r_0,1}$.
Using this, we can calculate the $n$-energy of
$u=\min\{ u_1, u_2\}$
as
\begin{align}
 \int_{A}|\nabla u|^n \, dx 
&=  \int_{A_{1/e,r_0}}|\nabla u_1|^n \, dx 
   +  \int_{A_{r_0,1}}|\nabla u_2|^n \, dx \nonumber \\
&=   c_{n-1} \bigl(   Q_1(\log er_0)^{n(\alpha_1 -1) + 1} 
    + Q_2(-\log r_0)^{n(\alpha_2 -1) + 1}\bigr) \nonumber \\
&     =:c_{n-1}\Qhat. \label{eq-Qhat}
\end{align}

Comparing this value with the energy $I_v=c_{n-1}$, given by \eqref{eq-Iv},
of the minimizer
$v$ with the same boundary values as $u$ on $A$
we see that $u$ is not a $Q$-quasisuperminimizer
for any $Q < \Qhat$.

\begin{table}[t]
  \begin{center}
    \begin{tabular}{|c|c|c|c|c|c|clllllll}
\hline
     $Q$ &$ p=n=2$ & $p = n=3$&$p=n=10$&$p=n=100$&
$\bQ=\frac{\displaystyle 2Q\strutheight}{\displaystyle Q+1\strutdepth}$ \\
\hline
$1.001$ &1.001480660&1.001480663&1.001480664&1.001480665&1.001500250\\
\hline
1.01&1.014825154&1.014825447&1.014825583&1.014825593&1.015024876\\
\hline
1.125&1.188100103&1.188143910&1.188164386&1.188165836&1.191176471\\
\hline
     2  &2.619135721 &2.621145314&2.622093879&2.622161265&2.666666667\\ 
      \hline
     10 &17.67321156& 17.70495731&17.72058231 &17.72170691&18.18181818\\ 
\hline
   100  &196.3948537&196.5222958&196.5905036&196.5955633& 198.0198020\\ 
\hline
    \end{tabular}
\caption{$\Qhat$ for certain values of $p=n$ with $Q_1=Q_2=Q$,  as well as 
 $\protect\bQ$ from Theorem~\ref{thm-BBK}.
  }
\label{table2}
  \end{center}
\end{table}

For specific values of $Q_1$, $Q_2$ and $p=n$, one
can calculate $\Qhat$ numerically,
(after first calculating
$\alp_1$, $\alp_2$ and $r_0$ numerically),
which we have done using Maple 18.
These results are presented in 
Table~\ref{table2}, which  shows the values
of $\Qhat$ for certain values of $p=n$ and with $Q_1=Q_2=Q$.

\begin{remark} \label{remark1}
The figures for $\Qhat$
in the columns  for $p=2$ and $p=100$ 
in Table~\ref{table2} above
are identical to the corresponding
columns for $\Qt$ in Table~2 in \cite{BBKorte}
(there are no columns for $p=3$ and $p=10$ therein).
This suggests that the increase in 
quasisuperminimizing constant in
the example above is identical 
to the increase in the $1$-dimensional example
in \cite[pp.\ 271--272]{BBKorte}.
This is indeed true, as we shall now show,
not only when $Q_1=Q_2$.

Let as before $p=n \ge 2$ be an integer.
(In the example in \cite[pp.\ 271--272]{BBKorte},
the underlying space is $\R$, but 
$p$ can be an arbitrary real number $>1$.
To compare it with our construction above we need $p$ 
to be an integer.)

Let as above $Q_1, Q_2>1$ be given, and choose
$1 - 1/n < \alpha_2 < 1 < \alpha_1$ 
such that $Q_1 = Q_{\al_1,n}$ and $Q_2 = Q_{\al_2,n}$ 
as in \eqref{eq-def-Q-al}.
With $p=n$ this choice of $\alpha_1$ and $\alpha_2$
also satisfies (3.2) in \cite{BBKorte}.
Next choose $r_0 \in (1/e,1)$ to be the unique
solution of \eqref{eqlem1}.
Then $\Qhat$ is given by \eqref{eq-Qhat}.

To relate this to $\Qt$ in \cite{BBKorte}, we
let $x_0=\log er_0 \in (0,1)$.
It then follows from \eqref{eqlem1} that
\[
        1=(1-x_0)^{\alp_2} + x_0^{\alp_1},
\]
i.e.\ $x_0$ is the unique solution of this equation,
which is the same as equation (3.4) in \cite{BBKorte}.
(That the solution is unique was shown in \cite{BBKorte},
but also follows from the uniqueness of $r_0 \in (1/e,1)$.)
Using (3.6) in \cite{BBKorte} (with $p=n$) we see that 
\begin{align} \label{eq-Qt}
\Qt &= Q_1 x_0^{n(\al_1-1)+1} + Q_2(1-x_0)^{n(\al_2-1)+1} \\
    & = Q_1 (\log er_0)^{n(\al_1-1)+1} + Q_2(-\log r_0)^{n(\al_2-1)+1}
    = \Qhat. \label{eq-Qt-Qhat}
\end{align}
\end{remark}

\begin{table}[t]
  \begin{center}
    \begin{tabular}{|r|r|r|r|r|r|r|}
\hline
\multicolumn{1}{|c|}{$Q_1$} & 
\multicolumn{1}{|c|}{$Q_2$} & 
\multicolumn{1}{|c|}{$p=1.2$} & 
\multicolumn{1}{|c|}{$p=2$} & 
\multicolumn{1}{|c|}{$p=10$} & 
\multicolumn{1}{|c|}{$p=100\strutdepth$} & 
\multicolumn{1}{|c|}{$\bQ\strutheight$}  \\
\hline
\stab{2}{10}  & \stab{10}{2}  
& \stab{10.450759}{10.222890} 
& \stab{10.474426}{10.293133} 
& \stab{10.477869}{10.309651} 
& \stab{10.477946}{10.310050} 
& 10.526316  \\ \hline
\stab{9}{10}  & \stab{10}{9} 
& \stab{16.513457}{16.473657}
& \stab{16.719374}{16.689656}
& \stab{16.762792}{16.736154}
& \stab{16.763819}{16.737258}
& 17.191011 \\ \hline
\stab{2}{100}  & \stab{100}{2} 
& \stab{100.427051}{100.055345}
& \stab{100.450836}{100.111528}
& \stab{100.454265}{100.134063}
& \stab{100.454342}{100.132692}
& 100.502513\\ \hline
\stab{10}{100}  & \stab{100}{10} 
& \stab{107.287586}{106.251592}
& \stab{107.542028}{106.758915}
& \stab{107.596390}{106.910025}
& \stab{107.597679}{106.913964}
& 108.108108\\ \hline
\stab{90}{100}  & \stab{100}{90} 
& \stab{185.787954}{185.723660}
& \stab{186.446301}{186.399453}
& \stab{186.634352}{186.594523}
& \stab{186.639194}{186.599568}
& 188.020891\\ \hline
    \end{tabular}
\caption{$\Qt$ (given by \eqref{eq-Qt})
for certain values of $p$, $Q_1$ and $Q_2$,   as well as 
 $\protect\bQ$ from Theorem~\ref{thm-BBK}.
When $p$ is an integer these are also the
values of $\Qhat$ by \eqref{eq-Qt-Qhat}.
}

\label{table3}
  \end{center}
\end{table}

%
%

\begin{remark}
The function $\Qt$ depends on $Q_1$, $Q_2$ and
$p$, i.e.\ $\Qt=\Qt(Q_1,Q_2,p)$.
Given $1<Q_1<Q_2$ (and $p$) it is natural to ask
which is larger of $\Qt(Q_1,Q_2,p)$ and $\Qt(Q_2,Q_1,p)$.
We have calculated some values of $\Qt(Q_1,Q_2,p)$
using Maple 18,
see  Table~\ref{table3}.
They all indicate that
\begin{equation}  \label{eq-Qt>Qt}
\Qt(Q_1,Q_2,p) > \Qt(Q_2,Q_1,p)
\quad \text{if } 1<Q_1<Q_2.
\end{equation}
Due to the intricate formula \eqref{eq-Qt}
for $\Qt$, involving $x_0$, we have not been able to show
that this is always the case.

The formula for $\Qt$ is valid  also for nonintegers $p>1$,
and $p=1.2$ is included in Table~\ref{table3}.
However, if $p$ is an integer 
then $\Qhat=\Qt$, by \eqref{eq-Qt-Qhat},
and the same reasoning about the comparison in \eqref{eq-Qt>Qt}
applies to $\Qhat$.
\end{remark}

\section{Adding dimensions}
\label{sect-dummy}

One way of making higher-dimensional examples from
  lower-dimensional ones is to add dummy variables.
The tensor product $u_1 \otimes u_2(x,y)=u_1(x)u_2(y)$ and tensor sum
$u_1 \oplus u_2=u_1(x)+u_2(y)$ of two harmonic functions
is again harmonic, a fact that
is well known and easy to prove.
The corresponding fact is false for \p-harmonic functions,
but it was observed in Bj\"orn--Bj\"orn~\cite{BBtensor}
that the tensor product and sum are quasiminimizers.
They moreover showed that the tensor product and sum of
quasiminimizers are again quasiminimizers, but typically with an
increase in the quasiminimizing constant even if both are $1$.
However, if one of the quasiminimizers is constant then the increase
in the quasiminimizing constant can be avoided, a fact that we shall use.
We first recall the following consequence of
the results in \cite{BBtensor}.

\begin{thm} \label{thm-BBtensor}
  Let $u$ be a $Q$-quasisuperminimizer in $\Om\subset \R^n$
  and let $I \subset \R$ be an interval.
  Then $\ub=u \otimes 1$ 
  is a $Q$-quasisuperminimizer in $\Om \times I$.
\end{thm}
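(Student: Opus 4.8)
The plan is to reduce the statement to a one-dimensional-in-the-extra-variable computation by testing the quasisuperminimizing inequality for $\ub=u\otimes1$ on $\Om\times I$ against an arbitrary nonnegative $\phi\in\Lipc(\Om\times I)$ and integrating out the $y$-variable. Writing points of $\Om\times I$ as $(x,y)$ with $x\in\Om$, $y\in I$, note that $\nabla\ub(x,y)=(\nabla u(x),0)$, so $|\nabla\ub|^p=|\nabla u(x)|^p$ is independent of $y$ and has no component in the $y$-direction. For fixed $y\in I$ the slice $\phi_y:=\phi(\cdot,y)$ lies in $\Lipc(\Om)$ (and is nonnegative when $\phi$ is), so the defining inequality \eqref{eqmain} for $u$ — which by the discussion after Lemma~\ref{lem1} it suffices to test against $\Lipc$ functions — gives, for each such $y$,
\[
   \int_{\{x:\phi_y(x)\ne0\}} |\nabla u(x)|^p\,dx
   \le Q \int_{\{x:\phi_y(x)\ne0\}} |\nabla(u+\phi_y)(x)|^p\,dx.
\]

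Next I would integrate this over $y\in I$. Using Fubini on both sides, the left-hand side becomes $\int_{\{\phi\ne0\}}|\nabla\ub|^p\,dx\,dy$, since $\{(x,y):\phi(x,y)\ne0\}=\bigcup_{y}(\{x:\phi_y(x)\ne0\}\times\{y\})$ and the integrand does not depend on $y$. For the right-hand side, the key pointwise inequality is
\[
   |\nabla(u+\phi_y)(x)|^p
   = \bigl(|\nabla u(x)+\nabla_x\phi(x,y)|^2\bigr)^{p/2}
   \le \bigl(|\nabla u(x)+\nabla_x\phi(x,y)|^2+|\partial_y\phi(x,y)|^2\bigr)^{p/2}
   = |\nabla(\ub+\phi)(x,y)|^p,
\]
because adding the nonnegative term $|\partial_y\phi|^2$ only increases the base and $t\mapsto t^{p/2}$ is increasing. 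Hence after integrating in $y$ and applying Fubini, the right-hand side is bounded by $Q\int_{\{\phi\ne0\}}|\nabla(\ub+\phi)|^p\,dx\,dy$, which is exactly the $Q$-quasisuperminimizer inequality for $\ub$ on $\Om\times I$. One should also check that $\ub\in\Wploc(\Om\times I)$, which is immediate since $u\in\Wploc(\Om)$ and $\ub$ is constant in $y$; on any compact $K_1\times K_2\subset\Om\times I$ one has $\int_{K_1\times K_2}(|\ub|^p+|\nabla\ub|^p)=|K_2|\int_{K_1}(|u|^p+|\nabla u|^p)<\infty$.

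The measurability/integrability bookkeeping for Fubini is the only place requiring a little care — one needs $y\mapsto\phi_y$ to be a well-behaved family (true since $\phi$ is Lipschitz with compact support, so $\nabla_x\phi$ and $\partial_y\phi$ are bounded measurable and $\phi_y\in\Lipc(\Om)$ with supports contained in a fixed compact set for all $y$), and one needs the set $\{\phi\ne0\}$ to be measurable so that the domains of integration match up under Fubini. None of this is genuinely difficult; the only conceptual point, and the "main obstacle" if there is one, is recognizing that the extra derivative $\partial_y\phi$ can simply be dropped from the right-hand side by monotonicity of $t\mapsto t^{p/2}$, so that no increase in the quasisuperminimizing constant occurs — this is exactly the phenomenon noted in the paragraph before the theorem, that tensoring with a constant avoids the increase that a general tensor product would incur. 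Alternatively, one could quote the relevant tensor-sum/tensor-product result of \cite{BBtensor} directly with the second factor taken to be the constant function $1$ on $I$ (which is trivially a $1$-quasisuperminimizer), but giving the short self-contained argument above is cleaner.
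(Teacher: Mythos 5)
Your argument is correct, but it takes a different route from the paper: the paper disposes of this theorem in one line, quoting the general tensor-product result (Theorem~7 of \cite{BBtensor}) with $u_1=u$, $u_2\equiv 1$, $Q_1=Q$ and $Q_2=0$, where the choice $Q_2=0$ is permitted for a constant second factor precisely so that no increase in the constant occurs. You instead give a self-contained slicing proof: test with a nonnegative $\phi\in\Lipc(\Om\times I)$ (legitimate, since the paper notes at the start of Section~\ref{sect-qmin} -- just \emph{before} Lemma~\ref{lem1}, not after -- that Lipschitz test functions suffice), apply the quasisuperminimizing inequality of $u$ to each slice $\phi_y\in\Lipc(\Om)$, integrate in $y$, and observe that $|\nabla u+\nabla_x\phi|^p\le\bigl(|\nabla u+\nabla_x\phi|^2+|\partial_y\phi|^2\bigr)^{p/2}=|\nabla(\ub+\phi)|^p$, so the extra $y$-derivative can simply be discarded. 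The Fubini bookkeeping you flag (identifying the gradient of the slice $\phi_y$ with the $x$-part of $\nabla\phi$ for a.e.\ $y$, and slicing the open set $\{\phi\ne0\}$) is standard and unproblematic, and your check that $\ub\in\Wploc(\Om\times I)$ is correct. What your approach buys is transparency and independence from \cite{BBtensor} in this special case; what the paper's approach buys is brevity and the full strength of the cited machinery, which also covers \p-admissible weights and tensor products of two nonconstant quasiminimizers -- a generality your direct argument does not reproduce, though it is not needed for this statement.
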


This result is true also if the first space $\R^n$ is equipped with a
so-called \p-admissible weight $w$, see \cite{BBtensor}.
In particular, by Theorem~3 in \cite{BBtensor}, $w \otimes 1$ is then a
\p-admissible weight on $\R^{n+1}$.

\begin{proof}
  This is a special case of Theorem~7 in~\cite{BBtensor},
  with $u_1=u$, $u_2 \equiv 1$, $Q_1=Q$ and $Q_2=0$.
  As mentioned in \cite[p.~5196]{BBtensor}, one is allowed to let $Q_2=0$
  if $u_2$ is a constant function.
\end{proof}

For the purposes in this paper,
this is not enough since, typically, the obtained $Q$ is not
the optimal quasisuperminimizing constant for $\ub$ even if
it is for $u$.
But if $I$ is in addition unbounded then $Q$ is optimal for $\ub$ if
it is for $u$, as we shall now show.

\begin{thm} \label{thm-newtensor}
  Let $u$ be a $Q$-quasisuperminimizer in $\Om\subset \R^n$,
  where $Q$ is the optimal quasisuperminimizing constant.
  Let $I \subset \R$ be an unbounded interval.
  Then $\ub=u \otimes 1$
  is a $Q$-quasisuperminimizer in $\Om \times I$,
  with $Q$ again being optimal.
\end{thm}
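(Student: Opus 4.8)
The plan is to reduce to the bounded case already covered by Theorem~\ref{thm-BBtensor}, and to use the unboundedness of $I$ together with the optimality of $Q$ for $u$ to show that the constant cannot be improved. The first half is immediate: since $I$ is an interval and $\ub = u\otimes 1$, Theorem~\ref{thm-BBtensor} (applied with any bounded subinterval, or directly) shows that $\ub$ is a $Q$-quasisuperminimizer in $\Om\times I$. So the content of the theorem is entirely in the optimality claim, i.e.\ that no $Q'<Q$ works for $\ub$ on $\Om\times I$.

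For the optimality, I would argue by contradiction: suppose $\ub$ is a $Q'$-quasisuperminimizer in $\Om\times I$ for some $Q'<Q$. The idea is to transfer test functions from $\Om$ up to $\Om\times I$, use the product structure, and then let the extent in the unbounded direction tend to infinity so that the ``cost'' of the cutoff in the $I$-direction becomes negligible. Concretely, fix a nonnegative $\phi\in\Lipc(\Om)$ that nearly witnesses the optimality of $Q$ for $u$, i.e.\ with
\[
   \int_{\phi\ne 0}|\nabla u|^p\,dx > (Q-\eps)\int_{\phi\ne 0}|\nabla(u+\phi)|^p\,dx
\]
for small $\eps>0$. Since $I$ is unbounded, pick a long subinterval and a Lipschitz cutoff $\eta_L$ on $I$ that equals $1$ on a length-$L$ portion, is supported in a slightly larger set, and has $\int_I|\eta_L'|^p\,dt$ bounded independently of $L$ (a fixed ramp of unit slope near each end), while $\int_I \eta_L^p\,dt \ge L$. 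Test $\ub$ against $\Psi(x,t) = \phi(x)\eta_L(t) \in \Lipc(\Om\times I)$ (nonnegative if $\phi\ge 0$). Expanding $|\nabla(\ub+\Psi)|^p = (|\nabla_x(\ub+\Psi)|^2 + |\partial_t\Psi|^2)^{p/2}$ and using that $\nabla_x\ub = (\nabla u)\otimes 1$ depends only on $x$, the leading terms factor as (integral over $\Om$) $\times$ (integral over $I$ of $\eta_L^p$ or of a comparable quantity), while the terms involving $\partial_t\Psi = \phi\,\eta_L'$ contribute only an $O(1)$ amount in the $t$-integration. Dividing the quasiminimizing inequality for $\ub$ by $\int_I\eta_L^p\,dt$ and letting $L\to\infty$, the error terms vanish and one is left with
\[
   \int_{\phi\ne 0}|\nabla u|^p\,dx \le Q'\int_{\phi\ne 0}|\nabla(u+\phi)|^p\,dx,
\]
contradicting the choice of $\phi$ once $\eps < Q-Q'$.

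**Main obstacle.** The delicate point is handling the cross terms in the pointwise expansion of $|\nabla(\ub+\Psi)|^p$: with $p\ne 2$ the expression $(a^2+b^2)^{p/2}$ is not additive, so one cannot literally split the energy into an $x$-part and a $t$-part. The fix is to estimate from above by a sum—using, for $p\ge 2$, an inequality like $(a^2+b^2)^{p/2}\le C_p(a^p+b^p)$, and for $1<p<2$ the subadditivity $(a^2+b^2)^{p/2}\le a^p+b^p$—so that the $t$-integral of the ``$b$'' (i.e.\ $\partial_t$) part is controlled by $\|\phi\|_\infty^p\int_I|\eta_L'|^p\,dt = O(1)$, and then observing that the multiplicative constant $C_p$ affects only this vanishing remainder, not the main term (where $\partial_t$ of $\ub$ is genuinely zero so no loss occurs). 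One must also choose $\eta_L$ so that the region where $0<\eta_L<1$ has bounded length while $\{\eta_L=1\}$ has length $\ge L$; on the intermediate strip the integrand $|\nabla_x\ub|^p\eta_L^p$ differs from $|\nabla_x\ub|^p$ only by a bounded factor over a bounded-length set, again an $O(1)$ error after dividing by $L$. Modulo these standard but slightly fiddly elementary inequalities, the argument goes through; I would also remark that if $u$ has the maximal $p$-energy allowed by $Q$ on some $G\Subset\Om$ (as in Definition~\ref{deff-determining}), one can shortcut the whole argument by comparing $\ub$ with the obvious product minimizer on $G\times J$ for bounded $J\subset I$ and again letting $J$ exhaust the unbounded direction.
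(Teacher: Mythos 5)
Your proposal is correct and takes essentially the same route as the paper: the quasisuperminimizing property comes from Theorem~\ref{thm-BBtensor}, and optimality is shown by testing with a near-optimal $\phi$ for $u$ multiplied by a long trapezoidal cutoff in the unbounded direction, then letting its length tend to infinity (and $\eps\to0$). The cross-term issue you single out as the main obstacle is dealt with in the paper without any pointwise splitting inequalities, simply by observing that the two ramp strips contribute a fixed finite amount independent of the length, so only the middle region, where the $t$-derivative vanishes identically, matters in the limit.
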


\begin{proof}
By Theorem~\ref{thm-BBtensor}, $\ub$ is a
$Q$-quasisuperminimizer,
so it is only the optimality of $Q$ that needs to be shown.
If $Q=1$ there is nothing to prove, so we can assume that $Q>1$.

Let $0<\eps<Q$.
Since $Q$ is optimal,
there is 
a nonnegative $\phi \in \Lipc(\Om)$ such that
\[
   \int_{\phi \neq 0} |\nabla u|^{p}\,dx
    >       (Q-\eps)\int_{\phi \neq 0} |\nabla( u + \phi)|^{p}\,dx,
\] 
see 
the beginning of Section~\ref{sect-qmin}.
As the integral on the left-hand side is positive, also
the integral on the right-hand side must be positive,
as otherwise $u$ would not be a quasisuperminimizer at all.

Next, let $m>0$ be given.
Since $I$ is unbounded we can find $a \in \R$ so that
$[a,a+m+2] \subset I$.
Assume without loss of generality that $a=0$,
and let
\[
\phib=\phi \otimes \phi_2,
\quad \text{where }
\phi_2(t)=\begin{cases}
     0, & \text{if } t  \le 0 \text{ or } t \ge m+2, \\
     t, & \text{if } 0 \le t  \le 1, \\
     1, & \text{if } 1  \le t \le m+1, \\
     m+2-t, & \text{if } m+1 \le t   \le m+2.
\end{cases}
\]
Then
\[
  \int_{\phib \neq 0} |\nabla \ub|^{p}\,dx\,dt
  \ge m    \int_{\phi \neq 0} |\nabla u|^{p}\,dx
  >       (Q-\eps)m\int_{\phi \neq 0} |\nabla( u + \phi)|^{p}\,dx,
\]
while
\[
  \int_{\phib \neq 0} |\nabla (\ub+\phib)|^{p}\,dx\,dt
  = 2  \int_{\{\phi \ne 0\} \times (0,1)} |\nabla (\ub+\phib)|^{p}\,dx\,dt
   +m\int_{\phi \neq 0} |\nabla( u + \phi)|^{p}\,dx.
\]
Letting $m \to \infty$ and then $\eps \to 0$
shows that $Q$ is optimal, since
the last integral is nonzero.
\end{proof}

It now follows directly from Theorem~\ref{thm-newtensor}
that we can add a dummy variable
to the examples constructed
 in Section~\ref{sect-min}
and in
Bj\"orn--Bj\"orn--Korte~\cite[Section~3]{BBKorte}.
As long as we  consider the dummy variable
taken over 
an unbounded interval,
we obtain a new example with the same increase
in the quasisuperminimizing constant.
This can be iterated so that we can add
an arbitrary (but finite) number of dummy variables.
This way we get higher-dimensional examples on unbounded sets.
However, it follows from the following lemma
that by taking Cartesian products
with large enough bounded intervals, 
one can obtain similar bounded examples with an increase 
in the quasisuperminimizing constant, which is 
arbitrarily close to the increase in Section~\ref{sect-min}
resp.\ \cite[Section~3]{BBKorte}.

\begin{lem}
  Let $\Om_1 \subset \Om_2 \subset \ldots \subset \Om=\bigcup_{j=1}^\infty \Om_j$
  be an increasing sequence of open subsets of $\R^n$.
  If $u$ is a $Q$-quasisuperminimizer in $\Om_j$ for every $j$,
  then it is also a $Q$-quasisuperminimizer in $\Om$.
\end{lem}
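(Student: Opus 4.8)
The plan is to verify the defining inequality \eqref{eqmain} for $u$ on $\Om$ directly, using a compactness/exhaustion argument. Recall from Section~\ref{sect-qmin} that it suffices to test against nonnegative $\phi \in \Lipc(\Om)$. So fix such a $\phi$. The key observation is that $\operatorname{supp}\phi$ is a compact subset of $\Om = \bigcup_j \Om_j$, and the $\Om_j$ form an increasing open cover of this compact set; hence $\operatorname{supp}\phi \subset \Om_{j_0}$ for some $j_0$. In particular $\phi \in \Lipc(\Om_{j_0})$.

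\emph{Key steps, in order.} First, I would note that $u \in \Wploc(\Om)$: this is immediate since $u \in \Wploc(\Om_j)$ for each $j$ and every point of $\Om$ lies in some $\Om_j$, on which $u$ already has the required local integrability of $u$ and $\nabla u$. Second, with $\phi$ and $j_0$ as above, since $u$ is a $Q$-quasisuperminimizer in $\Om_{j_0}$ and $\phi$ is a nonnegative test function in $\Lipc(\Om_{j_0})$, I would apply \eqref{eqmain} on $\Om_{j_0}$ to get
\[
   \int_{\phi \neq 0} |\nabla u|^{p}\,dx
    \leq      Q\int_{\phi \neq 0} |\nabla( u + \phi)|^{p}\,dx.
\]
Third, I would observe that both integrals here are over the set $\{\phi \ne 0\} \subset \operatorname{supp}\phi \subset \Om_{j_0} \subset \Om$, and that $u$, $\nabla u$ agree (as the same measurable functions) whether computed in $\Om_{j_0}$ or in $\Om$ — there is no ambiguity since $\Om_{j_0}$ is open in $\Om$. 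Hence the displayed inequality is exactly \eqref{eqmain} for $u$ in $\Om$ tested against $\phi$. Since $\phi$ was an arbitrary nonnegative Lipschitz test function with compact support in $\Om$, this shows $u$ is a $Q$-quasisuperminimizer in $\Om$.

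\emph{Main obstacle.} There is essentially no analytic obstacle here; the content is entirely the locality of the quasisuperminimizer condition combined with the fact that compactly supported test functions ``live'' at a finite stage of the exhaustion. The only point requiring a word of care is the implicit claim that membership in $\Wploc$ and the value of the energy integrals are genuinely local — i.e.\ unaffected by which ambient open set we regard $u$ as defined on — which is routine. One should also remark that the lemma does \emph{not} assert any relation between the optimal constants on the $\Om_j$ and on $\Om$; indeed the optimal constant on $\Om$ is the supremum of those on the $\Om_j$, and this is precisely why Cartesian products with \emph{bounded} intervals recover, in the limit, the increase obtained with unbounded intervals in Theorem~\ref{thm-newtensor}.
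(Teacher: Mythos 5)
Your proof is correct and follows essentially the same route as the paper: reduce to nonnegative test functions $\phi\in\Lipc(\Om)$, use compactness of the support to place $\phi$ in some $\Om_{j_0}$, and invoke the quasisuperminimizing inequality there, noting that the energy integrals are local. The extra remarks (membership of $u$ in $\Wploc(\Om)$, optimality of constants) are harmless but not needed for the statement.
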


\begin{proof}
  As mentioned at the beginning of Section~\ref{sect-qmin}
  it is enough to test
  \eqref{eqmain} with nonnegative $\phi \in \Lipc(\Om)$.
  By compactness, $\phi \in \Lipc(\Om_j)$ for some $j$,
  and thus   \eqref{eqmain}
  holds for this particular $\phi$ as 
  $u$ is a $Q$-quasisuperminimizer in $\Om_j$.
\end{proof}

\end{document}